\numberwithin{equation}{section}
\numberwithin{figure}{section}
\theoremstyle{plain}
\newtheorem{thm}{\protect\theoremname}
\theoremstyle{definition}
\newtheorem{defn}[thm]{\protect\definitionname}
\theoremstyle{plain}
\newtheorem{lem}[thm]{\protect\lemmaname}
\theoremstyle{plain}
\newtheorem{cor}[thm]{\protect\corollaryname}
\theoremstyle{remark}
\newtheorem{rem}[thm]{\protect\remarkname}
\DeclareMathSymbol{\lsb@l}{\mathalpha}{letters}{`l}
\providecommand{\corollaryname}{Corollary}
\providecommand{\definitionname}{Definition}
\providecommand{\lemmaname}{Lemma}
\providecommand{\remarkname}{Remark}
\providecommand{\theoremname}{Theorem}
\begin{document}
\global\long\def\conv{\operatorname{conv}}%

\global\long\def\diag{\operatorname{diag}}%

\global\long\def\diam{\operatorname{diam}}%

\global\long\def\dist{\operatorname{dist}}%

\global\long\def\rank{\operatorname{rank}}%

\global\long\def\Span{\operatorname{span}}%

\global\long\def\id{\operatorname{id}}%

\global\long\def\Aff{\operatorname{Aff}}%

\global\long\def\relint{\operatorname{relint}}%

\global\long\def\Int{\operatorname{int}}%

\global\long\def\R{\mathbb{R}}%

\global\long\def\Simplexe{\mathcal{S}}%

\global\long\def\Vertices{\mathcal{\mathcal{V}}}%

\title[A Strengthened Alexandrov Maximum Principle]{A Strengthened Alexandrov Maximum Principle\\ or Uniform Hölder
Continuity\\ for Solutions of the Monge--Ampère Equation\\ with
Bounded Right-Hand Side}
\author{Lukas Gehring}
\address{Institut für Mathematik, Friedrich-Schiller-Universität Jena, Ernst-Abbe-Platz
2, 07743 Jena, Germany}
\email{lukas.gehring@uni-jena.de}
\date{\today}
\begin{abstract}
This article is about the convex solution $u$ of the Monge--Ampère
equation on an at least 2-dimensional open bounded convex domain with
Dirichlet boundary data and nonnegative bounded right-hand side.

For convex functions with zero boundary data, an Alexandrov maximum
principle $\left|u(x)\right|\le C\dist(x,\partial\Omega)^{\alpha}$
is equivalent to (uniform) Hölder continuity with the same constant
and exponent. Convex $\alpha$-Hölder continuous functions are $W^{1,p}$
for $p<1/(1{-}\alpha)$. We prove Hölder continuity with the exponent
$\alpha=2/n$ for $n\ge3$ and any $\alpha\in(0,1)$ for $n=2$, provided
that the boundary data satisfy this Hölder continuity, and show that
these bounds for the exponent are sharp. The only means is to bound
the Hessian determinant of a certain explicit function on an $n$-dimensional
cylinder and to use the comparison princple. 
\end{abstract}

\thanks{This project received inspiration from the Almighty from whom all
ideas issue and funding from the European Union's Horizon 2020 research
and innovation programme (Grant agreement No.~891734).}
\keywords{\emph{2020 Mathematics Subject Classification: 35J96, Monge--Ampère
equation, Alexandrov maximum principle, Hölder continuity, $W^{1,p}$
regularity}}
\maketitle
\begin{flushright}
\emph{}%
\par\end{flushright}

\section{Introduction}

The subject of this article is the convex solution of the Monge--Ampère
equation
\begin{equation}
\begin{cases}
\det D^{2}u & =f\quad\text{in }\Omega,\\
u & =g\quad\text{on }\partial\Omega,
\end{cases}\label{eq:DP}
\end{equation}
for an open, bounded, and convex set $\Omega\subset\mathbb{R}^{n}$,
$n\ge2$, $0\le f\in L^{1}(\Omega)$ and a convex function $g\in C(\overline{\Omega})$.
The suitable notion of a weak formulation for this equation, the \emph{Alexandrov
formulation \cite{alexandrov}, }uses the subdifferential

\[
\partial u(x):=\left\{ \xi\in\mathbb{R}^{n}~\middle|~u\ge u(x)+\left<\xi,\bullet-x\right>\text{ in }\Omega\right\} ,
\]
the Lebesgue measure $\left|\bullet\right|$, and the Monge--Ampère
measure
\[
\mu_{u}:A\mapsto\left|\bigcup_{x\in A}\partial u(x)\right|
\]
and seeks a convex function $u\in C(\overline{\Omega})$ with 
\begin{equation}
\begin{cases}
\mu_{u} & =fdx\quad\text{in }\Omega,\\
u & =g\quad\text{on }\partial\Omega.
\end{cases}\label{eq:Alexandrov formulation}
\end{equation}

\subsection{Related prior work}

When this article was already completed, the author got to know that
the main result (\ref{lem:new_Alexandrov}) was already proven by
Caffarelli in \cite{caffarelli_localization} for $n\ge3$. The Alexandrov
formulation provides uniqueness of the convex solution for (\ref{eq:Alexandrov formulation})
and existence if $g=0$ or $\Omega$ is strictly convex (even for
finite Borel measures $\nu$ instead of $fdx$) \cite[Theorem~2.13, p.~20, Theorem~2.14, p.~24]{Figalli2017}.
A brief history of the Monge--Ampère equation can be found in the
introduction of \cite{Figalli2017}. The regularity of solutions has
been subject of intensive research. First of all, Alexandrov showed
his maximum principle \cite{alexandrov} for $u$ vanishing on the
boundary, which implies that $u\in C^{0,1/n}$ (see below). Trudinger--Urbas
\cite{trudinger_urbas} showed that a uniformly convex $C^{1,1}$
domain, a right-hand side $f(x,u,\nabla u)$ bounded from above by
$\mu(|u|)\dist(x,\partial\Omega)^{\beta}(1{+}|\nabla u|^{2})^{\alpha}$
with a positive and nodecreasing $\mu$, $\alpha\ge0$, $\beta\ge0$,
$\beta\ge\alpha{-}n{-}1$ and further conditions lead to $u\in C^{2}\cap C^{0,1}$.
Urbas \cite{urbas} proved a global Hölder estimate for $C^{1,1}$
domains and a right-hand side $f(x,u,\nabla u)$ bounded from below
by $\mu\dist(x,\partial\Omega)^{\beta}(1+\left|\nabla u\right|^{2})^{\alpha}$.
In a later work \cite{urbas_gauss}, he dealt with the equation of
prescribed Gauss curvature. Furthermore, there are estimates at the
boundary, if $u$ behaves on the boundary like $|x|^{2}$ \cite{savin_localization,savin_pointwise,savin_w2p}.
And there are global $W^{1,p}$ and $W^{2,p}$ estimates for solutions
of uniformly elliptic equations \cite{winter_n_w1p_at_the_b} (the
Monge--Ampère equations are elliptic but not uniformly) and the linearized
Monge--Ampère equations \cite{le_w2p,le_global_w1p}. A global $W^{1,p}$
estimate for the Monge--Ampère equation was not known to the author
yet.

\subsection{Basic properties}

The Monge--Ampère measure has the following scaling properties: If
$\det D^{2}u=f$, more generally if $\mu_{u}=fdx$, then
\begin{equation}
\mu_{cu\circ A}=c^{n}\left(\det L_{A}\right)^{2}f\circ Adx\label{eq:scaling properties}
\end{equation}
for $c\ge0$, an affine map $A:\mathbb{R}^{n}\rightarrow\mathbb{R}^{n}$,
and its linear map $L_{A}$. As an elliptic equation, the Monge--Ampère
equation satisfies a \emph{comparison principle}: If for continuous
convex functions $u$ and $v$ on the closure of a bounded open set
$\mathcal{U}\subset\Omega$,
\begin{align}
 &  & u & \le v & \text{on }\partial\mathcal{U}\nonumber \\
 & \text{and} & \mu_{u} & \ge\mu_{v} & \text{in }\mathcal{U},\label{eq:comparision principle}\\
 & \text{then also} & u & \le v & \text{in }\mathcal{U}.\nonumber 
\end{align}
The Monge--Ampère measure is superadditive \cite[Lemma~2.9, p.~17]{Figalli2017},
i.e.\ 
\begin{equation}
\mu_{u+v}\ge\mu_{u}+\mu_{v}.\label{eq:superadditivity}
\end{equation}
The Alexandrov maximum principle (AMP) says that the solution $u$
of (\ref{eq:Alexandrov formulation}) with $g=0$ satisfies $\left|u(x)\right|\le C\allowbreak\dist(x,\partial\Omega)^{\alpha}$
for $\alpha=1/n$ \cite[Theorem 2.8]{alexandrov,Figalli2017}. 

\subsection{Outline}

For nonvanishing boundary values, the AMP is generalized by Hölder
continuity of a convex function (see (\ref{eq:H=0000F6lderstetigkeit AMP}))
in the preliminary Section \ref{sec:Preliminaries}, as mentioned
in \cite[Lemma~3.3]{trudinger_wang}. Additionally, an $\alpha$-Hölder
continuous convex function $u$ satisfies $\left|\nabla u(x)\right|\lesssim\dist(x,\partial\Omega)^{\alpha-1}$
for every choice of $\nabla u(x)\in\partial u(x)$ (see Lemma \ref{lem:gradient differenzenquotient})
which leads to $u\in W^{1,p}$ for $p<1/(1{-}\alpha)$ (see Lemma
\ref{lem:H=0000F6lder Sobolev}) aside from other preliminaries. In
Section \ref{sec:certain explicit function}, the Hessian determinants
of certain explicit functions of the form $w(x)=a(x_{1})b(x')$ on
a cylinder $[0,h]\times\overline{B}_{\rho}^{n-1}$ are bounded from
\emph{below}. This function has the aimed growth behavior $\left|w(x)\right|\leq x_{1}^{\alpha}$
(for $n=2$ $\left|w(x)\right|\lesssim x_{1}(1{-}\ln x_{1})$, respectively)
near the boundary. Then, in Section \ref{sec:results}, the comparison
principle enables us to infer from that the main result Theorem \ref{thm:H=0000F6lderstetigkeit}
that the solution of (\ref{eq:Alexandrov formulation}) with right-hand
side bounded from \emph{above} is Hölder continuous on an arbitrary
open bounded convex domain. This implies a bound for the $L^{p}$
norm of the gradient of $u$ in Corollary \ref{thm:w1p}. In Section
\ref{sec:converse bounds}, an \emph{upper bound }for the Hessian
determinant of the above explicit function shows that the bounds for
the exponents in the Hölder continuity and in the $W^{1,p}$ estimate
are sharp.

\section{\label{sec:Preliminaries}Preliminaries}

\subsection{Notation}

The expression $\left|A\right|$ means the absolute value of $A$,
if $A$ is a real number or a real valued function; the Euclidean
norm $\sqrt{1/n\sum_{i=1}^{n}A_{i}^{2}}$, if $A\in\mathbb{R}^{n}$
and the spectral norm, if $A$ is a matrix; and the Lebesgue measure
of $A$, if $A\subset\mathbb{R}^{n}$ is a Lebesgue set. Let $\delta_{ij}$
be the Kronecker delta, i.e., 1 if $i=j$ and 0 otherwise. $\overline{Z}$
denotes the closure of the set $Z$. Throughout this article, $\Omega\in\mathbb{R}^{n}$
is an open, bounded, and convex domain. In contrast, $\Omega_{h}$
denotes the closed set 
\begin{equation}
\Omega_{h}:=\left\{ x\in\Omega~\middle|~\dist(x,\partial\Omega)\ge h\right\} .\label{eq:Omega_h}
\end{equation}
Let $B_{R}^{k}\subset\mathbb{R}^{k}$ be the $k$-dimensional ball
of radius $R$ around the origin. For $x=(x_{1},\dots,x_{n})\allowbreak\in\mathbb{R}^{n}$,
let $x':=(x_{2},\dots,x_{n})$. The line segment between two points
$x$ and $y\in\mathbb{R}^{n}$ is denoted by $\overline{xy}.$ Whenever
we write that $u=g$ on $\partial\Omega$, we imply that $u$ extends
continuously to the boundary. The convex envelope of a function $u$
is denoted by $u_{*}$. $C$ (also with an index) denotes a real constant
which may vary from line to line. Let $\Aff(\mathbb{R}^{n})$ be the
set of affine bijections $\mathbb{R}^{n}\rightarrow\mathbb{R}^{n}$.
By an abuse of notation, we identify an affine map $L$ with its linear
map. The spectral norm $\left|L\right|$ and $\det L$ always refer
to the linear map. For $p\in[1,\infty)$, $\left\Vert u\right\Vert _{p}:=\left(\int_{\Omega}\left|u(x)\right|^{p}dx\right)^{1/p}$
denotes the $L^{p}$ norm of $u$ on $\Omega$ and for $p=\infty$,
it denotes the usual essential supremum.
\begin{defn}
\label{def:Cylinder}For $h,\rho>0$, we will use cylinders
\[
K_{h,\rho}:=(0,h)\times B_{\rho}^{n-1}.
\]
\end{defn}

\subsection{Uniformly continuous convex functions}
\begin{defn}
For a function $u:\overline{\Omega}\rightarrow\mathbb{R}^{n}$ let
\[
\omega_{u}(\delta):=\sup_{\substack{x,y\in\overline{\Omega}\\
\left|x{-}y\right|\le\delta
}
}\left|u(x){-}u(y)\right|
\]
be the \emph{modulus of continuity of $u$.}
\end{defn}

Obviously, $\omega_{u}$ is nonnegative, monotonically increasing,
sublinear in $u$: For $\lambda\ge0$ 
\begin{equation}
\omega_{\lambda u+v}\le\lambda\omega_{u}+\omega_{v},\label{eq:sublinearity}
\end{equation}
and we have the following composition rule: For $u:\overline{\Omega}_{1}\to\overline{\Omega}_{2}$
and $v:\overline{\Omega}_{2}\to\mathbb{R}^{n}$ it holds that $\omega_{v\circ u}\le\omega_{v}\circ\omega_{u}.$
For affine $u$, this reads 
\begin{equation}
\omega_{v\circ u}(\delta)\le\omega_{v}(\left|u\right|\delta).\label{eq:composition rule}
\end{equation}

Now let $\overline{\Omega}$ be compact and $u:\overline{\Omega}\to\mathbb{R}$
be convex. Since on a line segment $x+(0,\infty)z\cap\overline{\Omega}$,
the function $0<\lambda\mapsto\frac{u(x)-u(x+\lambda z)}{\lambda}$
is monotonically decreasing, 
\begin{equation}
\omega_{u}(\bullet)/\bullet\quad\text{is monotonically decreasing.}\label{eq:sublinearity of modulus}
\end{equation}
The setting is illustrated in Figure \ref{fig:H=0000F6lderbild}:
For any $x,y\in\overline{\Omega}$, the quantity $u(x)-u(y)$ will
not decrease if the line segment $\overline{xy}$ is translated in
the direction $x{-}y$ to the boundary of $\partial\Omega$, i.e.
if $x$ and $y$ are replaced by $x'=x+\lambda(x{-}y)\in\partial\Omega$
and $y'=y+\lambda(x{-}y)$, respectively, with $\lambda\geq0$. 
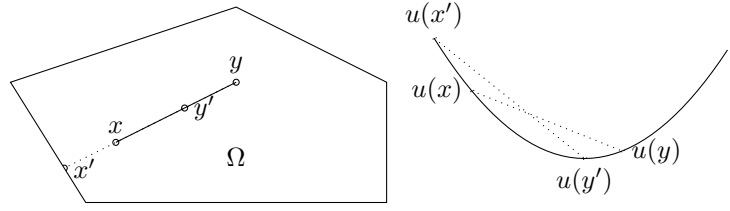
\begin{figure}
\begin{tikzpicture}[scale=2] 
\path[name path=rand] (0.5,1.5)--(1,.7); 
\draw[clip] (1,.7)--(3,.7)--(3,1.5)--(2,2)--(0.5,1.5)--cycle; 
\draw (2,1) node {$\Omega$}; the modulus of continuity
\coordinate (x) at (1.2,1.1); 
\coordinate (y) at (2,1.5); 
\draw[name path=sehne,dotted] (-1,0)--(y); 
\draw (x)node[above]{$x$}circle [radius=.2mm]--(y)node[above]{$y$}circle [radius=.2mm]; 
\draw[name intersections={of =rand and sehne, by={w}}] 
(w) node [right] {$x'$}circle [radius=.2mm] 
($(w)+(y)-(x)$) node [right] {$y'$} circle [radius=.2mm]; 
\end{tikzpicture} 
\begin{tikzpicture}[yscale=.4] 
\pgfplothandlerlineto 
\pgfplotfunction{\x}{-2,-1.9,...,2}{\pgfpointxy{\x}{\x*\x}} 
\pgfusepath{stroke} 
\draw[dotted] 
(-1.5,2.25) node [left]{$u(x)$} circle [radius=.3mm] -- 
(.5,.25) node [right]{$u(y)$} circle [radius=.3mm] 
(-2,4) node [above]{$u(x')$} circle [radius=.3mm] -- 
(0,0) node [below]{$u(y')$} circle [radius=.3mm]; 
\end{tikzpicture}\caption{\label{fig:H=0000F6lderbild}Translate the line segment to the boundary.
The difference will not decrease.}
\end{figure}
This shows
\begin{equation}
\omega_{u}(\delta)=\sup_{\substack{x\in\partial\Omega,y\in\overline{\Omega}\\
|x{-}y|\le\delta
}
}u(x)-u(y)\label{eq:H=0000F6lderstetigkeit boundary}
\end{equation}
for convex functions $u$ on compact domains $\overline{\Omega}$.
If $u$ \emph{vanishes} on the boundary additionally, the latter quantity
equals
\begin{equation}
\omega_{u}(\delta)=\sup_{\substack{y\in\overline{\Omega}\\
\dist(y,\partial\Omega)\le\delta
}
}-u(y)\label{eq:H=0000F6lderstetigkeit AMP}
\end{equation}
which is closely related to the AMP. Another direct consequence of
(\ref{eq:H=0000F6lderstetigkeit boundary}) is the following \emph{comparision
principle of the modulus of continuity}: For a convex function $u$
on $\overline{\Omega}$ with a (nonnecessarily convex) minorant $v$
with 
\begin{align}
u & \ge v\quad\text{in }\Omega,\nonumber \\
u & =v\quad\text{on }\partial\Omega,\label{eq:comparision4modulus}\\
\text{it holds that}\quad\omega_{u} & \leq\omega_{v}.\nonumber 
\end{align}
 In particular, this implies that for fixed boundary conditions $g|_{\partial\Omega}$
(with a convex $g\in C(\overline{\Omega})$), the modulus $\omega_{\tilde{g}}$
of any convex extension $\tilde{g}$ of these is minimized by their
convex envelope $\left(g|_{\partial\Omega}\right)_{*}$. The superadditivity,
the subadditivity and the comparision principles of the Monge--Ampère
measure and of the modulus compose to the following subadditivity
between the Monge--Ampère measure and the modulus: 
\begin{lem}
\label{lem:subadditivity MAM modul}For $i\in\{1,2\}$, let $u_{i}$
be convex functions on $\overline{\Omega}$. Let $u_{12}$ be convex
and continuous with 
\begin{align*}
\mu_{u_{12}} & \leq\mu_{u_{1}}+\mu_{u_{2}}\quad\text{in }\Omega,\\
u_{12} & =u_{1}+u_{2}\quad\text{on }\partial\Omega.
\end{align*}
Then 
\[
\omega_{u_{12}}\le\omega_{u_{1}}+\omega_{u_{2}}.
\]
\end{lem}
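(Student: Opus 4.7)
The plan is to combine the three pillars already established in Section \ref{sec:Preliminaries}: the superadditivity of the Monge--Ampère measure (\ref{eq:superadditivity}), the comparison principle (\ref{eq:comparision principle}), and the two structural properties of the modulus of continuity, namely its sublinearity (\ref{eq:sublinearity}) and the comparison principle for moduli (\ref{eq:comparision4modulus}). The key intermediate object will be the pointwise sum $u_{1}+u_{2}$, which I will sandwich against $u_{12}$ via the comparison principle.

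First I would observe that $u_{1}+u_{2}$ is convex and continuous on $\overline{\Omega}$, and that by the superadditivity (\ref{eq:superadditivity}) of $\mu$ one has
\[
\mu_{u_{1}+u_{2}}\;\ge\;\mu_{u_{1}}+\mu_{u_{2}}\;\ge\;\mu_{u_{12}}\quad\text{in }\Omega,
\]
while by hypothesis $u_{1}+u_{2}=u_{12}$ on $\partial\Omega$. Applying the comparison principle (\ref{eq:comparision principle}) with $u=u_{1}+u_{2}$ and $v=u_{12}$ yields
\[
u_{1}+u_{2}\le u_{12}\quad\text{in }\Omega.
\]

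Thus $u_{1}+u_{2}$ is a (necessarily convex) minorant of $u_{12}$ which coincides with $u_{12}$ on the boundary. The modulus comparison principle (\ref{eq:comparision4modulus}), applied with $u=u_{12}$ and $v=u_{1}+u_{2}$, then gives
\[
\omega_{u_{12}}\le\omega_{u_{1}+u_{2}},
\]
and the sublinearity (\ref{eq:sublinearity}) with $\lambda=1$ furnishes $\omega_{u_{1}+u_{2}}\le\omega_{u_{1}}+\omega_{u_{2}}$. Chaining these two inequalities delivers the claim.

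I do not expect any real obstacle; the whole argument is just bookkeeping that makes sure the hypotheses of each invoked principle line up. The only point that deserves a second look is the applicability of the comparison principle to the pair $(u_{1}+u_{2},u_{12})$, which needs both functions convex and continuous on $\overline{\Omega}$ — this is automatic for $u_{12}$ by assumption and for $u_{1}+u_{2}$ since sums of convex continuous functions are convex and continuous.
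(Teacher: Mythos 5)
Your proof is correct and follows exactly the same route as the paper: superadditivity gives $\mu_{u_{1}+u_{2}}\ge\mu_{u_{1}}+\mu_{u_{2}}\ge\mu_{u_{12}}$, the Monge--Ampère comparison principle then yields $u_{1}+u_{2}\le u_{12}$ with equality on $\partial\Omega$, and the modulus comparison principle followed by sublinearity finishes the chain. The only difference is that you spell out the intermediate inequality $\mu_{u_{1}}+\mu_{u_{2}}\ge\mu_{u_{12}}$ and note the regularity of $u_{1}+u_{2}$ explicitly, which the paper leaves implicit.
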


\begin{proof}
The superadditivity (\ref{eq:superadditivity}) says that $\mu_{u_{12}}\le\mu_{u_{1}+u_{2}}$,
the comparision principle (\ref{eq:comparision principle}) that $u_{12}\ge u_{1}+u_{2}$,
and the comparision principle (\ref{eq:comparision4modulus}) and
the subadditivity (\ref{eq:sublinearity}) of the modulus of continuity
show that
\[
\omega_{u_{12}}\le\omega_{u_{1}+u_{2}}\le\omega_{u_{1}}+\omega_{u_{2}}.
\]
\end{proof}
\begin{lem}[Bound for the gradient with $\omega_{u}$]
\label{lem:gradient differenzenquotient}Let $u:\overline{\Omega}\rightarrow\mathbb{R}$
be convex and $x\in\Omega$. Then for any vector $p\in\partial u(x)$
\[
\left|p\right|\le\sup_{y\in\partial\Omega}\frac{\left|u(y)-u(x)\right|}{\left|y{-}x\right|}\le\frac{\omega_{u}(\dist(x,\partial\Omega))}{\dist(x,\partial\Omega)}.
\]
\end{lem}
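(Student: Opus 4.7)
The plan is to prove the two inequalities separately, each by a short direct argument. The first inequality is a standard consequence of the definition of the subdifferential once one picks the right test point on $\partial\Omega$; the second just combines the definition of $\omega_u$ with the monotonicity already recorded in \eqref{eq:sublinearity of modulus}.

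For the first inequality, given $p\in\partial u(x)$, I would dispose of the trivial case $p=0$ and otherwise set $e:=p/|p|$. Since $\Omega$ is open, bounded and contains $x$, the ray $\{x+te\mid t>0\}$ crosses $\partial\Omega$ at some first point $y$, and by construction $\langle p,y-x\rangle=|p|\,|y-x|$. The defining inequality $u(y)\ge u(x)+\langle p,y-x\rangle$ of the subdifferential then gives
\[
|p|\le\frac{u(y)-u(x)}{|y-x|}\le\frac{|u(y)-u(x)|}{|y-x|},
\]
which is at most the supremum over $y\in\partial\Omega$ asserted in the statement.

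For the second inequality, I would fix an arbitrary $y\in\partial\Omega$, write $d:=\dist(x,\partial\Omega)$, and use that $|u(y)-u(x)|\le\omega_u(|y-x|)$ by definition of the modulus. Since $y\in\partial\Omega$ implies $|y-x|\ge d$, the monotonicity property \eqref{eq:sublinearity of modulus}, namely that $\delta\mapsto\omega_u(\delta)/\delta$ is nonincreasing, gives $\omega_u(|y-x|)/|y-x|\le\omega_u(d)/d$. Taking the supremum over $y\in\partial\Omega$ concludes the proof.

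I do not expect any real obstacle. The only points that require a bit of attention are choosing the boundary point in the first step so that $\langle p,y-x\rangle$ actually equals $|p|\,|y-x|$ rather than being strictly smaller, and applying \eqref{eq:sublinearity of modulus} in the correct direction, namely passing from the larger argument $|y-x|$ down to the smaller argument $d$, where the quotient \emph{increases}.
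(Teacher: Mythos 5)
Your proof is correct and follows essentially the same route as the paper's: choose the boundary point $y$ on the ray from $x$ in the direction of $p$, apply the subdifferential inequality to get the first bound, and then combine $|y-x|\ge\dist(x,\partial\Omega)$ with the monotone decrease of $\omega_u(\delta)/\delta$ from \eqref{eq:sublinearity of modulus} for the second. The only cosmetic difference is that the paper phrases the choice of $y$ as $\lambda(y-x)=p$ with $\lambda\ge0$, which also covers the case $p=0$ without a separate remark.
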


\begin{proof}
Let $y$ be a point on $\partial\Omega$ with $\lambda\left(y{-}x\right)=p$
for some $\lambda\geq0$. ($y$ is unique if $p\neq0$, because $\Omega$
is bounded and convex.) Then convexity of $u$ in $x$ yields
\begin{eqnarray*}
u(y)-u(x) & \ge & \langle p,y{-}x\rangle=\left|p\right|\left|y{-}x\right|.
\end{eqnarray*}
This shows $\left|p\right|\le\sup_{y\in\partial\Omega}\left|u(y)-u(x)\right|/\left|y{-}x\right|$,
which is, since $\omega_{u}(\bullet)/\bullet$ decreases monotonically
(\ref{eq:sublinearity of modulus}), bounded by $\omega_{u}(\dist(x,\partial\Omega))/\dist(x,\partial\Omega)$.
\end{proof}
As a composition of the scaling properties (\ref{eq:scaling properties}),
the composition rule (\ref{eq:composition rule}) of the modulus and
the chain rule of calculus, the next lemma sums up how to reduce solutions
for arbitrary bounded right-hand sides and domains to the normalized
case $\Lambda=1$ and $\Omega$ included in the unit ball $B_{1}^{n}$.
\begin{lem}[Reduction to a normalized problem]
\label{lem:Reduction to normalized}If $u\in C(\overline{\Omega})$
is convex, $\mu_{u}=fdx$, and $L\in\Aff(\mathbb{R}^{n})$, then
\[
v:=\Lambda^{1/n}\left|\det L\right|^{-2/n}u\circ L\in C(L\overline{\Omega})
\]
is convex, satisfies
\[
\mu_{v}=\Lambda f\circ Ldx,
\]
and 
\begin{align*}
\omega_{v}(\delta) & \le\Lambda^{1/n}\left|\det L\right|^{-2/n}\omega_{u}\left(\left|L\right|\delta\right),\\
\left|\nabla v\right| & \le\Lambda^{1/n}\left|L\right|\left|\det L\right|^{-2/n}\left|\nabla u\right|.
\end{align*}
\end{lem}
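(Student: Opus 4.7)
The plan is to assemble the four conclusions by directly combining the scaling identity (\ref{eq:scaling properties}), the composition rule (\ref{eq:composition rule}), and the transformation of the subdifferential under an affine change of variables. Each claim corresponds to essentially one of these inputs, so the proof should amount to short bookkeeping rather than a substantive argument. I expect no real obstacle; the only care required is that the gradient estimate be read in the sense of subdifferentials, since $u$ is only assumed convex and continuous.

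First, for convexity and the Monge--Ampère measure: $u\circ L$ is convex as the precomposition of a convex function with an affine map, and multiplication by the nonnegative constant $c:=\Lambda^{1/n}\left|\det L\right|^{-2/n}$ preserves convexity and continuity. The scaling property (\ref{eq:scaling properties}), applied with this $c$ and $A=L$, then gives
\[
\mu_v \;=\; c^{n}(\det L)^{2}\, f\circ L\,dx \;=\; \Lambda\left|\det L\right|^{-2}(\det L)^{2}\, f\circ L\,dx \;=\; \Lambda\, f\circ L\,dx,
\]
which is the desired identity.

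Second, for the modulus of continuity: by the sublinearity (\ref{eq:sublinearity}) we have $\omega_{v}=c\,\omega_{u\circ L}$, and the composition rule (\ref{eq:composition rule}), applicable because $L$ is affine with linear part of spectral norm $|L|$, bounds $\omega_{u\circ L}(\delta)\le\omega_{u}(|L|\delta)$. Combining the two gives the stated estimate on $\omega_v$ immediately.

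Third, for the gradient bound I would unwind the definition of the subdifferential. For $y$ in the interior of the transformed domain and any $p\in\partial v(y)$, the inequality $v(z)\ge v(y)+\langle p,z-y\rangle$ translates via $v=c\,u\circ L$ and $Lz-Ly=L(z-y)$ into
\[
u(Lz) \;\ge\; u(Ly)+\bigl\langle c^{-1}(L^{T})^{-1}p,\; Lz-Ly\bigr\rangle,
\]
so $q:=c^{-1}(L^{T})^{-1}p\in\partial u(Ly)$ and hence $p=c\,L^{T}q$. Taking norms yields $|p|\le c\,|L|\,|q|$, and since this holds for every selection $\nabla u(Ly)\in\partial u(Ly)$, the pointwise bound $|\nabla v|\le\Lambda^{1/n}|L|\left|\det L\right|^{-2/n}|\nabla u|$ follows.
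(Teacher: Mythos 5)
Your proof is correct and follows essentially the same route the paper sketches (scaling property for $\mu_v$, the composition rule for $\omega_v$, a derivative transformation for $\nabla v$). Your treatment of the gradient bound via the subdifferential transformation $\partial v(y)=c\,L^{T}\partial u(Ly)$ is a slightly more careful rendering of what the paper casually calls ``the chain rule of calculus,'' and is the right level of rigor for a merely convex $u$; the substance is unchanged.
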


\subsection{Hölder continuous convex functions are Sobolev}

The next two propositions will be used to estimate the integral $\int_{\Omega}\dist(x,\partial\Omega)^{q}dx$.
\begin{lem}
\label{lem:Lipschitz} Recall that $\Omega_{h}:=\left\{ x\in\Omega~\middle|~\dist(x,\partial\Omega)\ge h\right\} $,
let 
\begin{align*}
F & :=\left\{ (x,|x{-}y|,y)~\middle|~x\in\partial\Omega,y\in\Omega,\left|x{-}y\right|=\dist(y,\partial\Omega)\right\} ,
\end{align*}
 and 
\[
\tilde{\Omega}:=\pi_{12}(F):=\left\{ (x,h)~\middle|~\exists y\in\Omega:(x,h,y)\in F\right\} .
\]
Then
\begin{enumerate}
\item $\Omega_{h}$ is convex.
\item $F$ is univalent with respect to $y$, i.e.\ given $x\in\partial\Omega$
and $h\in(0,\infty)$, there is at most one $y$ such that $(x,h,y)\in F$.
The corresponding function 
\begin{align*}
f:\tilde{\Omega} & \rightarrow\Omega\\
(x,h) & \mapsto y,\quad\text{where }(x,h,y)\in F
\end{align*}
equals the projection $f(x,h)=\pi(x,\Omega_{h}):=\arg\min_{y\in\Omega_{h}}\left|x{-}y\right|$.
(Nevertheless we want to retain the domain $\tilde{\Omega}$ of $f$.)
\item $f$ is surjective.
\item If $(x,\left|x{-}y\right|,y)\in F,$ then also $(x,\left|x{-}z\right|,z)\in F$
for all $z\in\overline{xy}$.
\item Equip $\partial\Omega\times\mathbb{R}$ with the metric induced by
the Euclidean norm on $\mathbb{R}^{n+1}$:
\[
d\left((x,a),(y,b)\right):=\left|(x,a)-(y,b)\right|=\sqrt{\left|x{-}y\right|^{2}+\left|a{-}b\right|^{2}}.
\]
Then $f$ is 1-Lipschitz.
\end{enumerate}
\end{lem}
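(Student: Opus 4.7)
My plan is to dispatch parts (1)--(3) quickly and then focus on (4) and (5). For (1), $\Omega_h$ is convex because it equals $\bigcap_{v\in\overline{B}_{h}^{n}}(\overline{\Omega}-v)$, an intersection of convex sets; equivalently, it is a superlevel set of the concave function $\dist(\cdot,\partial\Omega)$ on the convex $\Omega$. For (2), I would observe that whenever $z\in\Omega_{h}$ and $x\in\partial\Omega$, the inclusion $z+B_{h}^{n}\subset\overline{\Omega}$ forces $|x-z|\ge h$; hence every $y$ with $(x,h,y)\in F$ lies in $\Omega_{h}$ and attains the infimum of $|x-\bullet|$ over $\Omega_{h}$, and the convexity from (1) makes this projection single-valued, giving $y=\pi(x,\Omega_{h})$. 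For (3), the boundedness of $\Omega$ makes $\partial\Omega$ compact, so every $y\in\Omega$ has at least one nearest boundary point $x$, producing a triple $(x,\dist(y,\partial\Omega),y)\in F$.

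For (4), I would write $z=(1-t)x+ty$ with $t\in[0,1]$; the upper bound $\dist(z,\partial\Omega)\le|x-z|=t|x-y|$ is immediate from $x\in\partial\Omega$. For the matching lower bound I would apply the affine map $w\mapsto(1-t)x+tw$ to the open ball $y+B_{|x-y|}^{n}$, which lies in $\Omega$ by the hypothesis $|x-y|=\dist(y,\partial\Omega)$; the image is exactly $z+B_{t|x-y|}^{n}$, and by convexity and openness of $\Omega$ this image is again contained in $\Omega$. Hence $\dist(z,\partial\Omega)\ge t|x-y|=|x-z|$, so $(x,|x-z|,z)\in F$.

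For the 1-Lipschitz statement (5), set $p:=f(x,a)$, $q:=f(y,b)$, and define the unit vectors $\nu_{p}:=(x-p)/a$ and $\nu_{q}:=(y-q)/b$. The crucial input will be that the hyperplane $T_{p}:=\{z:\langle z-x,\nu_{p}\rangle=0\}$, tangent to the ball $p+B_{a}^{n}$ at $x$, is in fact a supporting hyperplane of $\overline{\Omega}$ at $x$: indeed, since $p+B_{a}^{n}\subset\overline{\Omega}$ and $x\in\partial(p+B_{a}^{n})\cap\partial\Omega$, any hyperplane supporting $\overline{\Omega}$ at $x$ must support the ball at $x$ as well, and the tangent plane at $x$ is unique. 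Consequently $q+B_{b}^{n}\subset\overline{\Omega}$ lies entirely in the closed halfspace $T_{p}^{-}$, which forces the signed distance of $q$ from $T_{p}$ to be at least $b$, i.e.\ $\langle x-q,\nu_{p}\rangle\ge b$, equivalently $\langle p-q,\nu_{p}\rangle\ge b-a$. By symmetry, $\langle p-q,\nu_{q}\rangle\le b-a$. Substituting these two one-sided bounds into the expansion of $|x-y|^{2}=|(p-q)+(a\nu_{p}-b\nu_{q})|^{2}$ reduces $|x-y|^{2}+(a-b)^{2}-|p-q|^{2}\ge 0$ to $2ab(1-\langle\nu_{p},\nu_{q}\rangle)\ge 0$, which is a consequence of Cauchy--Schwarz.

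The main obstacle is step (5). Part (4) already yields $|f(y,a)-f(y,b)|=|a-b|$, and the standard 1-Lipschitz property of projection onto the convex $\Omega_{a}$ gives $|f(x,a)-f(y,a)|\le|x-y|$; combining these by the triangle inequality only provides an $\ell^{1}$-Lipschitz bound, with Euclidean constant $\sqrt{2}$. Sharpening to the sharp $\ell^{2}$ constant $1$ is exactly what forces the simultaneous use of both supporting-hyperplane inequalities $\langle p-q,\nu_{p}\rangle\ge b-a$ and $\langle p-q,\nu_{q}\rangle\le b-a$, so that the cross-cancellation among $a^{2}+b^{2}$, $-2ab$, and $-2ab\langle\nu_{p},\nu_{q}\rangle$ produces the clean remainder $2ab(1-\langle\nu_{p},\nu_{q}\rangle)$, which vanishes precisely when $\nu_{p}=\nu_{q}$ (the case of sharpness).
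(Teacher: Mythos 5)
Your proof is correct, and for parts (4) and (5) it takes a genuinely different route from the paper's. Parts (1)--(3) are essentially the same argument as the paper's (convexity of $\Omega_h$ via inscribed balls, uniqueness of the projection onto a closed convex set, compactness of $\partial\Omega$). For (4), where the paper uses a short triangle-inequality chain
\[
\dist(y,\partial\Omega)\le\dist(z,\partial\Omega)+|z-y|\le|x-z|+|z-y|=|x-y|
\]
and concludes by forcing equality throughout, you instead push the inscribed ball $y+B_{|x-y|}^n$ through the homothety $w\mapsto(1-t)x+tw$ and invoke the standard open-segment lemma for convex sets; both are fine, the paper's is slightly leaner. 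For (5), your argument is a clean, geometric alternative: you identify the unique supporting hyperplane $T_p$ of $\overline\Omega$ at $x$ as the tangent plane of the inscribed ball $p+B_a^n$, use it to bound $\langle p-q,\nu_p\rangle\ge b-a$ (and by symmetry $\langle p-q,\nu_q\rangle\le b-a$), and expand $|x-y|^2=|(p-q)+(a\nu_p-b\nu_q)|^2$ to reduce the claim to $2ab(1-\langle\nu_p,\nu_q\rangle)\ge0$. I checked the algebra and the sign conventions; it closes.

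One remark on your discussion of the ``obstacle'': you dismiss the decomposition $f(x,a)-f(y,b)=(f(x,a)-f(y,a))+(f(y,a)-f(y,b))$ as yielding only the constant $\sqrt2$. That is only true if you combine the two pieces with the plain triangle inequality. The paper in fact uses exactly this decomposition but then observes, via the variational inequality $\langle y-\pi(y,\Omega_a),\,z-\pi(y,\Omega_a)\rangle\le0$ for $z\in\Omega_a$ (applied with $z=f(x,a)$, and using from part (4) that $f(y,a)$ lies on $\overline{y\,f(y,b)}$ so that $f(y,a)-f(y,b)$ is positively parallel to $y-f(y,a)$), that the two pieces meet at a non-acute angle; this upgrades the bound from $\ell^1$ to the sharp $\ell^2$ form $\sqrt{|x-y|^2+|a-b|^2}$. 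So the decomposition route is not actually stuck at $\sqrt2$; the paper's extra ingredient is the projection variational inequality, whereas yours is the tangent supporting hyperplane at $x$. The two ingredients are, of course, two faces of the same convexity fact, but the resulting proofs read quite differently.
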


\begin{proof}
If $\Omega$ is the empty set, then also $\Omega_{h}$, $F$ and $\partial\Omega$
are empty, $f$ is the function $\emptyset\rightarrow\emptyset$ and
the statements are trivial. Let us investigate the case where $\Omega\neq\emptyset$. 

(1) $x\in\Omega_{h}$, if and only if $x{+}B_{h}^{n}\subset\Omega$.
Let $x,y\in\Omega_{h}.$ Since $\Omega$ is convex, it contains the
convex hull $\overline{xy}+B_{h}$ of $\left(x{+}B_{h}^{n}\right)\cup\left(y{+}B_{h}^{n}\right)$,
hence $\lambda x+(1{-}\lambda)y+B_{h}^{n}\subset\Omega$ and $\lambda x+(1{-}\lambda)y\in\Omega_{h}$.

(2) If $(x,h,y)\in F$, then $y\in\Omega_{h}$ and there cannot be
any point in $\Omega_{h}$ closer to $x$ than $y$. This means that
$y$ is the projection of $x$ onto $\Omega_{h}$, which is unique
\cite[Theorem V.2, p.~79]{brezis}. 

(3) Since $\partial\Omega$ is compact, every $y\in\Omega$ has a
proximum $x=\arg\min_{x\in\partial\Omega}\left|x{-}y\right|$ on $\partial\Omega$
and $f(x,\left|x{-}y\right|)=y$.

(4) We have to show that $\dist(z,\partial\Omega)=|x{-}z|$. The triangle
inequality, the infimizing property of $\dist$ and the fact that
$z$ lies on the line segment $\overline{xy}$ show that
\[
\dist(y,\partial\Omega)\le\dist(z,\partial\Omega)+\left|z{-}y\right|\leq\left|x{-}z\right|+\left|z{-}y\right|=\left|x{-}y\right|.
\]
 Since $\dist(y,\partial\Omega)=\left|x{-}y\right|$, all terms of
this line must be equal and $\dist(z,\partial\Omega)=\left|x{-}z\right|$.

(5) Let $(x,a,z),(y,b,w)\in F$ and without loss of generality let
$a\le b$. According to (4), there is $v\in\overline{yw}$ with $(y,a,v)\in F$.
We decompose
\begin{align*}
f(x,a)-f(y,b) & =\left(f(x,a){-}f(y,a)\right)+\left(f(y,a){-}f(y,b)\right)\\
 & =\left(\pi(x,\Omega_{a}){-}\pi(y,\Omega_{a})\right)+\left(f(y,a){-}f(y,b)\right).
\end{align*}
Since $v\in\overline{yw}$, the length of the second term $\left|f(y,a)-f(y,b)\right|\allowbreak=b{-}a$.
Since $\Omega_{h}$ is closed and convex, the projection $\pi(\bullet,\Omega_{a})$
is 1-Lipschitz according to \cite[Proposition V.3, p.~80]{brezis},
so the length of the first term $\left|f(x,a)-f(y,a)\right|\allowbreak\le\left|x{-}y\right|$.
Because of the well-known fact that for $\Omega_{a}$ as for any convex
set,
\[
\left<y{-}\pi(y,\Omega_{a}),z{-}\pi(y,\Omega_{a})\right>\le0\quad\text{for all }z\in\Omega_{a}
\]
\cite[Theorem V.2, p.~79]{brezis}, and since $f(y,a)-f(y,b)$ points
in the same direction as $y-f(y,a)$, we know that the angle between
the vectors is not acute:
\[
\left<f(y,a){-}f(y,b),f(x,a){-}f(y,a)\right>\le0.
\]
This implies that 
\begin{align*}
\left|f(x,a)-f(y,b)\right| & \leq\sqrt{\left|x{-}y\right|^{2}+\left|a{-}b\right|^{2}}.
\end{align*}
\end{proof}
\begin{cor}[Volume of the layers]
\label{cor:Hausdorff}Let $\Omega\subset B_{R}^{n}$ be a bounded
convex set. Then for all $0<a<b<\infty$
\begin{equation}
\left|\Omega_{a}\setminus\Omega_{b}\right|\leq\mathcal{H}^{n-1}(\partial\Omega)(b{-}a)\leq\mathcal{H}^{n-1}\left(\partial B_{1}^{n}\right)R^{n-1}(b{-}a),
\end{equation}
where $\mathcal{H}^{n-1}(\partial\Omega)$ denotes the $(n{-}1)$-dimensional
Hausdorff measure.
\end{cor}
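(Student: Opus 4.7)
The plan is to parametrize $\Omega_a \setminus \Omega_b$ by the projection function $f$ from Lemma \ref{lem:Lipschitz} and exploit its 1-Lipschitz property. Explicitly, take any $y \in \Omega_a \setminus \Omega_b$. Then $h := \dist(y,\partial\Omega) \in [a,b)$, and by part (3) of Lemma \ref{lem:Lipschitz} together with the construction in its proof, there is a boundary point $x \in \partial\Omega$ with $|x{-}y| = h$ and $f(x,h) = y$. Hence
\[
\Omega_a \setminus \Omega_b \;\subseteq\; f\bigl(\tilde{\Omega} \cap (\partial\Omega \times [a,b])\bigr) \;\subseteq\; f\bigl(\partial\Omega \times [a,b] \cap \tilde{\Omega}\bigr).
\]

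Next I would apply the 1-Lipschitz property of $f$ from part (5). Since a 1-Lipschitz map does not increase the $n$-dimensional Hausdorff measure, and $\mathcal{H}^n$ coincides with Lebesgue measure in $\mathbb{R}^n$,
\[
\left|\Omega_a \setminus \Omega_b\right| \;\le\; \mathcal{H}^n\bigl(\partial\Omega \times [a,b]\bigr).
\]
The product $\partial\Omega \times [a,b]$, equipped with the induced Euclidean metric on $\mathbb{R}^{n+1}$, satisfies the product inequality
\[
\mathcal{H}^n\bigl(\partial\Omega \times [a,b]\bigr) \;=\; \mathcal{H}^{n-1}(\partial\Omega)\,(b{-}a),
\]
which for bounded convex $\Omega$ is justified by the fact that $\partial\Omega$ is a Lipschitz hypersurface, so locally the product is the image of a Lipschitz chart from $\mathbb{R}^{n-1} \times [a,b]$ and Fubini applies.

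For the second inequality, I would invoke the standard monotonicity of surface area under inclusion of convex bodies: if $K_1 \subseteq K_2$ are bounded convex sets then $\mathcal{H}^{n-1}(\partial K_1) \le \mathcal{H}^{n-1}(\partial K_2)$, which follows e.g.\ from the 1-Lipschitz character of the nearest-point projection onto $K_1$. Applied to $\Omega \subseteq B_R^n$, this gives
\[
\mathcal{H}^{n-1}(\partial\Omega) \;\le\; \mathcal{H}^{n-1}(\partial B_R^n) \;=\; \mathcal{H}^{n-1}(\partial B_1^n)\,R^{n-1},
\]
finishing the chain.

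The only delicate point is the product identity $\mathcal{H}^n(\partial\Omega \times [a,b]) = \mathcal{H}^{n-1}(\partial\Omega)\,(b{-}a)$; otherwise the argument is purely a combination of the covering by $f$, its 1-Lipschitz constant from Lemma \ref{lem:Lipschitz}, and the standard convex-geometric comparison of perimeters. One could also bypass the product formula entirely by applying the coarea formula to the distance function $\dist(\cdot,\partial\Omega)$, whose level sets are the boundaries of the convex sets $\Omega_h$ and which is 1-Lipschitz, yielding directly $|\Omega_a \setminus \Omega_b| = \int_a^b \mathcal{H}^{n-1}(\partial\Omega_h)\,dh \le \mathcal{H}^{n-1}(\partial\Omega)(b{-}a)$ by the same convex perimeter monotonicity.
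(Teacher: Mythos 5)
Your argument is correct and follows essentially the same route as the paper: both cover $\Omega_a\setminus\Omega_b$ by the $1$-Lipschitz map $f$ of Lemma \ref{lem:Lipschitz}, bound the Lebesgue measure by the $\mathcal{H}^n$-measure of (a subset of) $\partial\Omega\times[a,b]$, use the product formula to get $\mathcal{H}^{n-1}(\partial\Omega)(b{-}a)$, and finish with monotonicity of surface area of nested convex bodies. The only differences are presentational — you justify the product identity via rectifiability of $\partial\Omega$ and prove the perimeter monotonicity directly rather than citing it, and you mention the coarea formula as an alternative — but the underlying mechanism is identical.
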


\begin{proof}
In terms of Lemma \ref{lem:Lipschitz}, the set of interest equals
\[
\Omega_{a}\setminus\Omega_{b}=f\left(\tilde{\Omega}\cap(\partial\Omega\times[a,b))\right).
\]
Since $f$ is 1-Lipschitz, $\left|\Omega_{a}\setminus\Omega_{b}\right|$
can be estimated from above by the $n$-dimensional Hausdorff measure
of the preimage $\tilde{\Omega}\cap(\partial\Omega\times[a,b))$ (cf.\ \cite[Theorem~2.8, p.~97]{Evans}),
which is at most
\[
(b{-}a)\mathcal{H}^{n-1}(\partial\Omega).
\]
The surface measure of $\Omega$ is bounded by the surface measure
of a ball including $\Omega$ \cite[Corollary A.16, p.~172]{stefani,Figalli2017},
which shows the second inequality.
\end{proof}
\begin{lem}[Hölder continuous convex functions are Sobolev]
 \label{lem:H=0000F6lder Sobolev}Let $\overline{\Omega}\subset B_{R}^{n}$,
$r:=\max\dist(\bullet,\partial\Omega)$, and let $u\in C(\overline{\Omega})$
be convex and satisfy
\[
\left|u(x)-u(y)\right|\le C_{H}\left|x{-}y\right|^{\alpha}
\]
for all $x,y\in\overline{\Omega}$ and some $a\in(0,1]$. Further,
let $p\in[0,\infty)$, $\beta\in\mathbb{R}$\textup{ with $\left(1{-}\alpha\right)p-\beta=:q<1$}.
Then

\[
\int_{\Omega}\left|\nabla u(x)\right|^{p}\dist(x,\partial\Omega)^{\beta}dx\leq\mathcal{H}^{n-1}\left(\partial B_{1}^{n}\right)R^{n-1}C_{H}^{p}\frac{r^{1-q}}{1{-}q}.
\]
Especially, $u\in W^{1,p}(\Omega)$ for all $p<1{/}\left(1{-}\alpha\right)$.
\end{lem}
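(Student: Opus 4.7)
The plan is to combine the pointwise gradient estimate of Lemma~\ref{lem:gradient differenzenquotient} with the layer-volume estimate of Corollary~\ref{cor:Hausdorff}. The Hölder hypothesis gives $\omega_{u}(\delta)\le C_{H}\delta^{\alpha}$, so Lemma~\ref{lem:gradient differenzenquotient} immediately supplies the pointwise bound
\[
\left|\nabla u(x)\right|\le C_{H}\,\dist(x,\partial\Omega)^{\alpha-1}
\]
at every $x\in\Omega$ and every choice of $\nabla u(x)\in\partial u(x)$. Raising this to the $p$-th power and multiplying by $\dist(x,\partial\Omega)^{\beta}$ reduces matters to showing
\[
\int_{\Omega}\dist(x,\partial\Omega)^{-q}\,dx\le\mathcal{H}^{n-1}(\partial B_{1}^{n})\,R^{n-1}\,\frac{r^{1-q}}{1{-}q}, \qquad q=(1{-}\alpha)p-\beta<1.
\]

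For this weighted-distance integral I would use an onion-peeling argument driven by the distribution function $F(t):=\left|\Omega\setminus\Omega_{t}\right|$ on $[0,r]$. Corollary~\ref{cor:Hausdorff} says precisely that
\[
F(b)-F(a)=\left|\Omega_{a}\setminus\Omega_{b}\right|\le\mathcal{H}^{n-1}(\partial B_{1}^{n})\,R^{n-1}\,(b{-}a)
\]
for $0<a<b\le r$, so $F$ is Lipschitz on $[0,r]$ with constant $K:=\mathcal{H}^{n-1}(\partial B_{1}^{n})R^{n-1}$; in particular $F$ is absolutely continuous with $F'\le K$ a.e. Since $F$ is the distribution function of $x\mapsto\dist(x,\partial\Omega)$ on $\Omega$, the pushforward identity
\[
\int_{\Omega}\phi(\dist(x,\partial\Omega))\,dx=\int_{0}^{r}\phi(t)\,dF(t)
\]
holds for every nonnegative Borel $\phi$. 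Specializing to $\phi(t)=t^{-q}$ and using $dF\le K\,dt$ gives, by the hypothesis $q<1$,
\[
\int_{\Omega}\dist(x,\partial\Omega)^{-q}\,dx\le K\int_{0}^{r}t^{-q}\,dt=K\,\frac{r^{1-q}}{1{-}q},
\]
which is the desired estimate. The Sobolev statement then follows by specializing to $\beta=0$: the condition $q<1$ becomes $p<1/(1{-}\alpha)$, and since a continuous function on a bounded domain is automatically in $L^{\infty}\subset L^{p}$, this finite weak-gradient bound yields $u\in W^{1,p}(\Omega)$.

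No single step looks like a real obstacle; the proof is essentially the composition of two previously established lemmas. The only place worth pausing is the pushforward identity, but for nonnegative Borel $\phi$ it is immediate from Fubini once one writes
\[
\phi(\dist(x,\partial\Omega))=\int_{0}^{\infty}\mathbf{1}_{\{s<\phi(\dist(x,\partial\Omega))\}}\,ds
\]
and identifies the $x$-superlevel sets as sublevel sets of the distance function, whose measures are exactly $F(t)$. Everything else is the pointwise gradient bound and an elementary one-variable integral.
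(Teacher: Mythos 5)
Your proposal is correct and follows the same route as the paper: the pointwise gradient bound $\left|\nabla u(x)\right|\le C_{H}\dist(x,\partial\Omega)^{\alpha-1}$ from Lemma~\ref{lem:gradient differenzenquotient}, followed by a change of variables in the distance function whose layer measure is controlled by Corollary~\ref{cor:Hausdorff}. The paper packages the layer estimate as a measure $\sigma([a,b)):=\left|\Omega_{a}\setminus\Omega_{b}\right|$ while you use the distribution function $F(t):=\left|\Omega\setminus\Omega_{t}\right|$ and its Lipschitz bound, but these are the same device; otherwise the arguments coincide.
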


\begin{proof}
As a locally Lipschitz function, $u$ is differentiable a.e.\ according
to Ra\-de\-ma\-cher's theorem, and $\nabla u(x)$ coincides with
the classical derivative of $u$ in $x$ a.e. The following holds
for these points and the other null set can be ignored when integrating.
Lemma \ref{lem:gradient differenzenquotient} and the Hölder continuity
of $u$ show that
\[
\left|\nabla u(x)\right|\le\frac{\omega_{u}(\dist(x,\partial\Omega))}{\dist(x,\partial\Omega)}\le C_{H}\dist(x,\partial\Omega)^{\alpha-1},
\]
and thus
\begin{align*}
\left|\nabla u(x)\right|^{p}\dist(x,\partial\Omega)^{\beta} & \leq C_{H}^{p}\dist(x,\partial\Omega)^{-q}.
\end{align*}
Using the measure on $[0,\infty)$ defined by
\[
\sigma([a,b)):=\left|\Omega_{a}\setminus\Omega_{b}\right|,
\]
the integral of interest is bounded by
\[
\int_{\Omega}\left|\nabla u(x)\right|^{p}\dist(x,\partial\Omega)^{\beta}dx\leq C_{H}^{p}\int_{\Omega}\dist(x,\partial\Omega)^{-q}dx=C_{H}^{p}\int_{t=0}^{r}t^{-q}d\sigma.
\]
By means of Corollary \ref{cor:Hausdorff}, this can be further estimated
by 
\begin{align*}
C_{H}^{p}\int_{t=0}^{r}t^{-q}d\sigma & \leq\mathcal{H}^{n-1}(\partial\Omega)C_{H}^{p}\int_{0}^{r}t^{-q}dt\leq\mathcal{H}^{n-1}\left(\partial B_{1}^{n}\right)R^{n-1}C_{H}^{p}\frac{r^{1-q}}{1{-}q}
\end{align*}
 since $q<1$.
\end{proof}
\medskip
\begin{figure}
\begin{tikzpicture} 
\fill[gray!7] (-1.5,0) rectangle (6,3.5);
\draw (-1.5,0) -- (6,0);
\draw (4.7,.3) node{$[0,\infty)\times \R^{n-1}$};
\filldraw[fill=white] (0,0) -- (3,0) -- (4,2) -- (2,3) -- (-1,1) -- cycle; 
\draw (3,2) node {$L\Omega$}; 
\filldraw[draw=gray,fill=gray!20] (.5,0) rectangle (2.5,2); 
\draw (1.5,.5) node {$K_{2,2}$}; 
\draw[very thick] (.5,0) -- node[below]{$F$} (2.5,0);  
\draw[very thick] (.5,2) -- node[above]{$D$}(2,3) -- (2.5,2.75);
\draw[dotted] (2.5,2.75)--(2.5,2);
\end{tikzpicture} \caption{\label{fig:K in Omega}$K_{2,2}$ in $L\Omega$}
\end{figure}
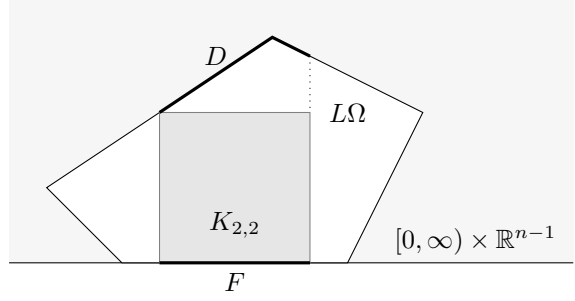
The following last preliminary lemma prepares the Theorem \ref{thm:converse_Alexandrov}
about converse bounds.
\begin{lem}
\label{lem:Randdaten annullieren}Recall that $K_{h,\rho}=(0,h)\times B_{\rho}^{n-1}.$
Assume that $\Omega$ is contained in the upper half space $(0,\infty)\times\mathbb{R}^{n-1}$,
that the boundary of $\Omega$ contains the flat hyperdisc 
\[
\tilde{F}:=\{0\}\times B_{\rho}^{n-1}\subset\partial\Omega
\]
for some $\rho>0$ and let $u\in C(\overline{\Omega})$ be convex
and $u|_{\tilde{F}}$ be affine. Then there is a map $L\in\Aff(\mathbb{R}^{n})$
such that (the configuration is depicted in Figure \ref{fig:K in Omega})
\[
F:=\{0\}\times B_{2}^{n-1}\subset L\tilde{F}\cup\overline{K}_{2,2}\subset L\overline{\Omega}\subset[0,\infty)\times\mathbb{R}^{n-1}.
\]
Further, there is an affine function $l_{g}:\mathbb{R}^{n}\rightarrow\mathbb{R}$
which depends only on $L$ and the boundary data $g:=u|_{\partial\Omega}$
such that 
\begin{align*}
u\circ L^{-1}-l_{g} & \leq0\quad\text{in }K_{2,2},\\
u\circ L^{-1}-l_{g} & =0\quad\text{on }F.
\end{align*}
\end{lem}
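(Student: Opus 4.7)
The plan is to build $L$ as a diagonal affine map with positive scaling factors, one in the $x_1$ direction and one in the $x'$-directions, inflating a suitably chosen pyramid inside $\Omega$ with base $\tilde F$ into a set containing $\overline K_{2,2}$; and to take $l_g$ to be the affine extension of $u|_{\tilde F}\circ L^{-1}$ obtained by adding a steep enough linear term in the $x_1$-direction to dominate $u\circ L^{-1}$ on the top face of $K_{2,2}$.

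For the pyramid I first locate an interior point of the form $p=(p_1,0)\in\Omega$ with $p_1>0$. Starting from any $q=(q_1,q')\in\Omega$, the open segment from the boundary point $(0,0)\in\overline{\tilde F}$ to the interior point $q$ lies in $\Omega$, so $tq\in\Omega$ for $t\in(0,1]$; choosing $t$ small enough that $|tq'|<\rho$ puts $(0,-tq')$ in $\tilde F$, and the midpoint $(tq_1/2,0)$ of $tq$ and $(0,-tq')$ is again in $\Omega$. Call this midpoint $p$. By convexity $\conv(\overline{\tilde F}\cup\{p\})\subset\overline\Omega$, and at relative height $\lambda=x_1/p_1\in[0,1/2]$ its cross-section is the centered ball $\overline B^{n-1}_{(1-\lambda)\rho}$, which contains $\overline B^{n-1}_{\rho/2}$. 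I then set $L(x_1,x'):=(4x_1/p_1,\,4x'/\rho)$: it is an affine bijection with positive scaling in the first coordinate, whence $L\overline\Omega\subset[0,\infty)\times\mathbb{R}^{n-1}$ and $L\tilde F=\{0\}\times B_4^{n-1}\supset F$. At height $y_1=4\lambda\in[0,2]$ the image of the above cross-section under $L$ is $\overline B^{n-1}_{4(1-\lambda)}=\overline B^{n-1}_{4-y_1}\supset\overline B_2^{n-1}$, so $L$ of the pyramid contains $\overline K_{2,2}$. The chain $F\subset L\tilde F\cup\overline K_{2,2}\subset L\overline\Omega\subset[0,\infty)\times\mathbb{R}^{n-1}$ then follows, with the first inclusion anyway immediate from $F\subset\overline K_{2,2}$.

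For $l_g$ I exploit that $u|_{\tilde F}$ is affine: $u\circ L^{-1}$ restricted to $L\tilde F$ has the form $a\cdot y'+b$, with $a\in\mathbb{R}^{n-1}$ and $b\in\mathbb{R}$ determined only by $g|_{\tilde F}$ and $L$. I define $l_g(y):=a\cdot y'+b+Cy_1$ and choose $C:=(M-m)/2$, where $M:=\max_{\partial\Omega}g$ and $m:=\min_{|y'|\le 2}(a\cdot y'+b)$; both quantities depend only on $L$ and $g$. On $F$, where $y_1=0$, one has $l_g=u\circ L^{-1}$; on the top $\{2\}\times\overline B_2^{n-1}$, $l_g\ge m+2C=M\ge u\circ L^{-1}$, using that a convex function on a bounded convex domain is dominated by its boundary values. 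Since $l_g$ is affine and $u\circ L^{-1}$ is convex on $\overline K_{2,2}$, the inequality at $y_1\in\{0,2\}$ propagates along each vertical segment to yield $u\circ L^{-1}\le l_g$ throughout $\overline K_{2,2}$, with equality on $F$, as required.

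I expect the main obstacle to be the opening geometric step, namely extracting an interior point $p$ with vanishing $x'$-coordinate so that the pyramid has cross-sections centered exactly at the origin. Once this alignment is in place, the rest — one diagonal dilation for $L$, an affine lift with a well-chosen slope $C$, and a one-dimensional convexity propagation along the $x_1$-axis — is routine.
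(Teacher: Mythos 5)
Your proof is correct and follows the same strategy as the paper's: inscribe a convex body adjacent to $\tilde F$ whose image under a diagonal affine dilation $L$ covers $\overline{K}_{2,2}$, then take $l_g$ to be the affine extension of $u|_{\tilde F}\circ L^{-1}$ plus a sufficiently steep multiple of $y_1$ that dominates $u\circ L^{-1}$ on the top of the cylinder, so convexity along vertical segments gives $u\circ L^{-1}\le l_g$ on $K_{2,2}$ with equality on $F$. The only cosmetic differences are that the paper inscribes a small cylinder directly (so no re-centering of the apex is needed) and sizes the added slope from a supremum of the adjusted function over the ``uppermost'' boundary points above $F$, whereas you inscribe an axis-aligned pyramid and take the slope from $\max_{\partial\Omega}g$; both choices yield the required domination on the top face.
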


\begin{proof}
Since $\Omega$ is open and convex, there is a small cylinder $K$
with a base included in $\tilde{F}$. Let $L$ be a map which maps
$K$ to $K_{2,2}$. At first, subtract the affine function $u|_{\tilde{F}}\circ L^{-1}$:
$v(x):=u(L^{-1}x)-u|_{\tilde{F}}\left(0,(L^{-1}x)'\right).$ Then
$v|_{F}=0$. Let $D$ be the ``uppermost boundary points above $F$'',
\[
D:=\left\{ \left(\max\left\{ x_{1}~\middle|~(x_{1},x')\in L\overline{\Omega}\right\} ,x'\right)~\middle|~x'\in B_{2}^{n-1}\right\} .
\]
Since $K_{2,2}\subset\Omega$, the first component $x_{1}$ of a point
$x\in D$ is at least $2$. Since $\overline{D}\subset L\overline{\Omega}$
is compact, $v|_{\overline{D}}$ is bounded: $v|_{\overline{D}}\le M\in\mathbb{R}$.
Then 
\[
u_{0}(x):=v(x)-\frac{M}{2}x_{1}
\]
vanishes on $F$ and is, because of its convexity, nonpositive everywhere
between $F$ and $D$, as well as on $D$. Thus, let $l_{g}$ be the
sum of the two subtracted functions.
\end{proof}

\section{Explicit functions on cylinders with Hessian determinants bounded
from below or above\label{sec:certain explicit function}}
\begin{defn}
\label{def:main}Let $v_{h}\in C(\overline{K}_{h,1})$ be the convex
solution of 
\[
\begin{cases}
\mu_{v}=1dx & \text{in }K_{h,1}\\
v=0 & \text{on }\partial K_{h,1}.
\end{cases}
\]
For $n=2$ and $0<\varepsilon\le1/2$, let $a_{\varepsilon}(x_{1}):=x_{1}^{1-\varepsilon}$.
For $n\ge3$ let $a(x_{1}):=x_{1}^{2/n}$ and, for brevity, $a_{\varepsilon}:=a$,
so we can summarize all these cases by
\[
a_{\varepsilon}(x_{1}):=x_{1}^{2/n-\delta_{n2}\varepsilon}.
\]
For $n=2$ let
\[
\underline{a}(x_{1}):=x_{1}\left(1{-}\ln x_{1}\right)
\]
and 
\[
\overline{a}(x_{1}):=x_{1}\left(\frac{1}{2}-\ln x_{1}\right)^{1/2},
\]
both continuously extended by $\underline{a}(0):=\overline{a}(0):=0$.
Again for brevity, we set for $n\ge3$
\[
\underline{a}:=\overline{a}:=a.
\]
Further, let 
\[
b(x'):=\frac{1}{2}\left|x'\right|^{2}-1
\]
and 
\begin{align*}
w_{\varepsilon}:\overline{K}_{1,\sqrt{2}} & \rightarrow(-\infty,0]\\
(x_{1},x') & \mapsto a_{\varepsilon}(x_{1})b(x'),\\
\overline{w}:\overline{K}_{1,\sqrt{2}} & \rightarrow(-\infty,0]\\
(x_{1},x') & \mapsto\overline{a}(x_{1})b(x').
\end{align*}
\end{defn}

\begin{rem}
The functions $w_{\varepsilon}$, $w$, and $\overline{w}$ vanish
on the bottom base of the cylinder $K_{h,\sqrt{2}}$, where $x_{1}=0$,
and on the side, where $\left|x'\right|=\sqrt{2}$.
\end{rem}

\begin{lem}
\label{lem:bounds w_eps} For every $0<\varepsilon\le1/2$, there
exists a small positive radius $\rho>0$ such that $w_{\varepsilon}$
restricted to $\overline{K}_{1,\rho}$ is convex and its Hessian determinant
is bounded from below by a positive number $\lambda$. For $n=2$,
valid values are 
\[
\lambda:=\lambda_{\varepsilon}:=\frac{\varepsilon}{4}\text{ for }\rho:=\rho_{\varepsilon}:=\sqrt{\frac{\varepsilon}{2}}.
\]
\end{lem}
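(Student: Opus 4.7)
The plan is to compute $\det D^{2}w_{\varepsilon}$ directly by exploiting the tensor-product structure $w_{\varepsilon}(x_{1},x')=a_{\varepsilon}(x_{1})b(x')$ with $b(x')=|x'|^{2}/2-1$. Since $\partial_{ij}b=\delta_{ij}$ for $i,j\ge 2$ and $b$ does not depend on $x_{1}$, the Hessian of $w_{\varepsilon}$ has a block form whose lower-right $(n{-}1)\times(n{-}1)$ block is the scalar multiple $a_{\varepsilon}(x_{1})\,I_{n-1}$, positive definite for $x_{1}>0$. This invites the Schur complement formula: writing $\alpha:=2/n-\delta_{n2}\varepsilon$ so that $a_{\varepsilon}(x_{1})=x_{1}^{\alpha}$, a short computation yields
\[
\det D^{2}w_{\varepsilon}(x)\;=\;a_{\varepsilon}(x_{1})^{n-1}\Bigl(a_{\varepsilon}''(x_{1})\,b(x')-\tfrac{(a_{\varepsilon}'(x_{1}))^{2}}{a_{\varepsilon}(x_{1})}|x'|^{2}\Bigr),
\]
and inserting $a_{\varepsilon}(x_{1})=x_{1}^{\alpha}$ and $b(x')=|x'|^{2}/2-1$ collapses this to the clean expression
\[
\det D^{2}w_{\varepsilon}(x)\;=\;\alpha\,x_{1}^{\,n\alpha-2}\Bigl((1-\alpha)-\tfrac{\alpha+1}{2}|x'|^{2}\Bigr).
\]

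The two cases then finish the argument. For $n\ge 3$, $\alpha=2/n$ gives $n\alpha-2=0$, so the $x_{1}$-dependence drops out entirely and any $\rho<\sqrt{2(1-\alpha)/(\alpha+1)}$ keeps the bracket, hence $\det D^{2}w_{\varepsilon}$, bounded below by a positive constant. For $n=2$, $\alpha=1-\varepsilon$ gives $x_{1}^{\,n\alpha-2}=x_{1}^{-2\varepsilon}\ge 1$ on $(0,1]$, while the bracket reduces to $\varepsilon-\tfrac{2-\varepsilon}{2}|x'|^{2}$. Plugging in $\rho_{\varepsilon}^{2}=\varepsilon/2$ lower-bounds the bracket by $\varepsilon(2+\varepsilon)/4$, and a short check for $\varepsilon\in(0,1/2]$ verifies $(1-\varepsilon)(2+\varepsilon)\ge 1$, producing the claimed lower bound $\lambda_{\varepsilon}=\varepsilon/4$.

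Convexity of $w_{\varepsilon}$ on $\overline{K}_{1,\rho}$ falls out of the same Schur complement identity: both the lower-right block and the Schur complement are positive on the open cylinder, so $D^{2}w_{\varepsilon}$ is positive definite there, and convexity extends to the closure by continuity of $w_{\varepsilon}$. The only real bookkeeping concerns the sign pattern: the term $a_{\varepsilon}''(x_{1})b(x')$ is the product of two negatives and therefore contributes \emph{positively}, while the $|x'|^{2}$ correction is negative and must be dominated by the first. That is the only place where the smallness of $\rho$ enters; apart from this, the argument is a purely computational verification.
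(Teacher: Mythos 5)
Your proof is correct and takes essentially the same route as the paper. The paper verifies convexity by Sylvester's criterion on the trailing principal minors and expands the determinant by the Leibniz formula, arriving at the identical expression $\bigl(\tfrac{a_\varepsilon''}{a_\varepsilon}b(x')-(\tfrac{a_\varepsilon'}{a_\varepsilon}|x'|)^{2}\bigr)a_\varepsilon^{n}$; your Schur-complement framing is just a cleaner packaging of the same $1\times(n{-}1)$ block structure and gives the same formula and the same positivity conclusion. The only superficial difference is bookkeeping: the paper lower-bounds $b(x')$ and $|x'|^{2}$ by their extremes on $\overline{K}_{1,\rho}$ before collecting terms, while you collect terms first (your bracket $(1-\alpha)-\tfrac{\alpha+1}{2}|x'|^{2}$ is literally the paper's $(1-\alpha)(1-\rho^{2}/2)-\alpha\rho^{2}$ expanded, after $|x'|\le\rho$), and you substitute $\rho_\varepsilon^{2}=\varepsilon/2$ exactly rather than through the intermediate bound $\rho^{2}\le\varepsilon/(2-\varepsilon)$. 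Both yield $\lambda_\varepsilon=\varepsilon/4$, your check $(1-\varepsilon)(2+\varepsilon)\ge 1$ being the analogue of the paper's $(1-\varepsilon)\varepsilon/2\ge\varepsilon/4$.
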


\begin{proof}
In $\overline{K}_{1,\rho}$, we have $b(x')\le\rho^{2}/2-1$, $\partial_{i}b(x')=x_{i}$
and $\partial_{ij}b(x')=\delta_{ij}$ for $i,j\in\{2,\dots,n\}$,
so
\[
D^{2}w_{\varepsilon}(x_{1},x')=\left(\begin{array}{cccc}
a_{\varepsilon}''(x_{1})b(x') & a_{\varepsilon}'(x_{1})x_{2} & \cdots & a_{\varepsilon}'(x_{1})x_{n}\\
a_{\varepsilon}'(x_{1})x_{2} & a_{\varepsilon}(x_{1}) & 0 & 0\\
\vdots & 0 & \ddots & 0\\
a_{\varepsilon}'(x_{1})x_{n} & 0 & 0 & a_{\varepsilon}(x_{1})
\end{array}\right).
\]
Sylvester's criterion means that $w_{\varepsilon}$ is convex, if
all the principal minors
\[
M_{k,\dots,n}:=\det\left(\begin{array}{ccc}
\partial_{kk}w_{\varepsilon} & \cdots & \partial_{kn}w_{\varepsilon}\\
\vdots & \ddots & \vdots\\
\partial_{nk}w_{\varepsilon} & \cdots & \partial_{nn}w_{\varepsilon}
\end{array}\right)
\]
are positive for all $x$ in the interior $K_{1,\rho}$. For $k\ge2$
and $\rho<\sqrt{2}$, obviously $M_{k,\dots,n}=a_{\varepsilon}(x_{1})^{n-k+1}>0$.
For $k=1$, we have to calculate the determinant of the whole matrix,
e.g.\ by the Leibniz formula, and to bound it:
\begin{align}
\det D^{2}w_{\varepsilon}(x_{1},x') & =a_{\varepsilon}''(x_{1})b(x')a_{\varepsilon}(x_{1})^{n-1}-\sum_{i=2}^{n}a_{\varepsilon}'(x_{1})^{2}x_{i}^{2}a_{\varepsilon}(x_{1})^{n-2}\nonumber \\
 & =\left(\frac{a_{\varepsilon}''(x_{1})}{a_{\varepsilon}(x_{1})}b(x')-\left(\frac{a'_{\varepsilon}(x_{1})}{a_{\varepsilon}(x_{1})}\left|x'\right|\right)^{2}\right)a_{\varepsilon}(x_{1})^{n}.\label{eq:det}
\end{align}
Note that 
\begin{align*}
\frac{a_{\varepsilon}'}{a_{\varepsilon}}(x_{1}) & =\left(\frac{2}{n}-\delta_{n2}\varepsilon\right)x_{1}^{-1},\\
\frac{a_{\varepsilon}''}{a_{\varepsilon}}(x_{1}) & =\left(\frac{2}{n}-\delta_{n2}\varepsilon\right)\left(\frac{2}{n}-1-\delta_{n2}\varepsilon\right)x_{1}^{-2},
\end{align*}
which yields
\begin{align*}
\det D^{2}w_{\varepsilon}(x_{1},x') & \geq\left(\frac{2}{n}{-}\delta_{n2}\varepsilon\right)\left(\left(1{-}\frac{2}{n}{+}\delta_{n2}\varepsilon\right)\left(1{-}\frac{1}{2}\rho^{2}\right)-\left(\frac{2}{n}{-}\delta_{n2}\varepsilon\right)\rho^{2}\right)x_{1}^{-2\delta_{n2}\varepsilon}
\end{align*}
 which is bounded away from 0 for $\varepsilon\leq1/2$ and sufficiently
small $\rho$. For $n=2$ this is
\begin{align*}
\det D^{2}w_{\varepsilon}(x_{1},x') & \geq\left(1{-}\varepsilon\right)\left(\varepsilon-\left(1{-}\frac{\varepsilon}{2}\right)\rho^{2}\right)x_{1}^{-2\varepsilon}.
\end{align*}
Since $\varepsilon\le1/2$, this is bounded by 
\[
\det D^{2}w_{\varepsilon}(x_{1},x')\geq\frac{\left(1{-}\varepsilon\right)\varepsilon}{2}\geq\frac{\varepsilon}{4}=\lambda_{\varepsilon},
\]
if
\begin{align*}
\rho_{\varepsilon} & :=\sqrt{\frac{\varepsilon}{2}}\leq\sqrt{\frac{\varepsilon}{2{-}\varepsilon}}.
\end{align*}
\end{proof}
\begin{lem}[Strengthened AMP on a square]
\label{lem:AMP on a square}There exists a dimensional constant $C_{n}>0$
such that the solution $v_{2}:\overline{K}_{2,1}\to(-\infty,0]$ and
the function $\underline{a}$ from Definition \ref{def:main} satisfy
for every $(x_{1},x')\in\overline{K}_{1,1}$
\[
\left|v_{2}(x_{1},x')\right|\le C_{n}\underline{a}(x_{1}).
\]
\end{lem}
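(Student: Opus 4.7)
The strategy is to apply the comparison principle~(\ref{eq:comparision principle}) with a scaled, translated copy of the explicit barrier from Definition~\ref{def:main}, using $v_2$ as the majorant. For $n \ge 3$, I would fix the dimensional constants $\rho_{n},\lambda_{n} > 0$ from Lemma~\ref{lem:bounds w_eps}. Given a target point $(x_{1},x') \in \overline{K}_{1,1}$, pick a center $y' \in \overline{B}_{1-\rho_{n}}^{n-1}$ with $|x'-y'| \le \rho_{n}$: take $y' = x'$ whenever $|x'| \le 1-\rho_{n}$, and otherwise take the radial projection $y' = (1-\rho_{n})x'/|x'|$. On the full-height cylinder $\mathcal{C}_{y'} := (0,2) \times B_{\rho_{n}}^{n-1}(y') \subset K_{2,1}$, the shifted barrier $W(\xi_{1},\xi') := \lambda_{n}^{-1/n}\,a(\xi_{1})\,b(\xi'-y')$ is convex with $\mu_{W} \ge d\xi = \mu_{v_{2}}$. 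Granted $W \le v_{2}$ on $\partial\mathcal{C}_{y'}$, the comparison principle then yields $W \le v_{2}$ throughout $\mathcal{C}_{y'}$, and evaluation at $(x_{1},x')$ gives $|v_{2}(x_{1},x')| \le \lambda_{n}^{-1/n}\,a(x_{1})\,|b(x'-y')| \le \lambda_{n}^{-1/n}\,\underline{a}(x_{1})$.

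The boundary inequality $W \le v_{2}$ is immediate on the top $\{\xi_{1} = 2\}$ and the bottom $\{\xi_{1} = 0\}$: both lie in $\partial K_{2,1}$ where $v_{2} = 0$, while $W \le 0$ throughout. The \emph{main obstacle} is the interior side $\{|\xi'-y'| = \rho_{n}\}$: there $|W(\xi_{1},\xi')| = \lambda_{n}^{-1/n}(1-\rho_{n}^{2}/2)\,\xi_{1}^{2/n}$, whereas the classical Alexandrov maximum principle gives only the upper bound $|v_{2}(\xi_{1},\xi')| \le C_{n}^{*}\,\xi_{1}^{1/n}$ (using $\dist(\xi,\partial K_{2,1}) \le \xi_{1}$), which points the \emph{wrong} direction for small $\xi_{1}$. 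I would resolve this by enlarging the multiplicative constant in $W$ beyond $\lambda_{n}^{-1/n}$ and carefully restricting the comparison sub-domain---for instance truncating $\mathcal{C}_{y'}$ below at some $\xi_{1} = h_{0}$ depending on the new scaling, and treating the removed slice $\{\xi_1 < h_0\}$ via classical AMP absorbed into $C_n$, or iterating the estimate to upgrade the exponent from $1/n$ to $2/n$.

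For $n = 2$, the target bound $\underline{a}(x_{1}) = x_{1}(1-\ln x_{1})$ contains a logarithm that no single $\varepsilon$ produces. I would instead use $w_{\varepsilon}$ with $\rho_{\varepsilon} = \sqrt{\varepsilon/2}$ and $\lambda_{\varepsilon} = \varepsilon/4$ from Lemma~\ref{lem:bounds w_eps}, choosing $\varepsilon$ as a function of the evaluation point: the scaled barrier $(2/\sqrt{\varepsilon})\,w_{\varepsilon}$ has value $(2/\sqrt{\varepsilon})\,x_{1}^{1-\varepsilon}$ at its center, minimized over $\varepsilon \in (0,1/2]$ at $\varepsilon_{*} \asymp 1/\log(1/x_{1})$, yielding an upper bound of order $x_{1}\sqrt{\log(1/x_{1})} \lesssim \underline{a}(x_{1})$. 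Repeating the $n \ge 3$ argument with this point-dependent $\varepsilon$, verifying the side condition uniformly in $\varepsilon$ as above, and taking the resulting constants dimensionally uniform in $x_{1}$ then gives the claimed $C_{2}$.
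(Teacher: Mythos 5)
Your strategy is genuinely different from the paper's, and it contains a gap that your proposed fixes do not close.

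The paper never translates $w_{\varepsilon}$ to interior sub-cylinders. Instead it \emph{stretches} $w_{\varepsilon}|_{\overline{K}_{1,\rho}}$ radially by the factor $1/\rho$ to obtain $w_{1,\varepsilon}(x_{1},x'):=\lambda^{-1/n}\rho^{-2(n-1)/n}w_{\varepsilon}(x_{1},\rho x')$ on $\overline{K}_{1,1}$ (with the constant chosen via (\ref{eq:scaling properties}) so that $\det D^{2}w_{1,\varepsilon}\ge 1$), and then reflects in $x_{1}$ to cover $K_{2,1}$. The decisive point is that after this stretching the comparison domain is all of $K_{2,1}$, so the lateral boundary of the comparison domain \emph{is} the lateral boundary of $K_{2,1}$, on which $v_{2}=0\ge w_{1,\varepsilon}$ trivially. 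There is no interior side face at all, and the comparison principle applies cleanly.

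Your sub-cylinder $\mathcal{C}_{y'}$ introduces an interior side face $\{|\xi'-y'|=\rho_{n}\}$, and you correctly identify that $W\le v_{2}$ cannot be verified there. The mismatch is one of exponents, not constants: on that face $|W|\asymp\xi_{1}^{2/n}$, while the best a priori information on $v_{2}$ is the classical AMP $|v_{2}|\lesssim\xi_{1}^{1/n}$, and $\xi_{1}^{2/n}\ll\xi_{1}^{1/n}$ as $\xi_{1}\to 0$. None of your three proposed remedies addresses this. Enlarging the multiplicative constant cannot overcome an exponent gap that opens up as $\xi_{1}\to 0$. Truncating at $\xi_{1}=h_{0}$ merely relocates the same failure to the new bottom face $\{\xi_{1}=h_{0}\}$, where again $|v_{2}|\lesssim h_{0}^{1/n}$ may exceed $|W|\asymp h_{0}^{2/n}$. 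And absorbing the removed slice $\{\xi_{1}<h_{0}\}$ via classical AMP requires $\xi_{1}^{1/n}\lesssim\xi_{1}^{2/n}$ there, which is false near $0$. An iteration that upgrades $\alpha_{k}\to\alpha_{k+1}$ would face the same obstruction at every step as long as $\alpha_{k}<2/n$, since the side-face inequality $\xi_{1}^{2/n}\gtrsim\xi_{1}^{\alpha_{k}}$ fails near $0$. (Incidentally, if your translation scheme did work it would deliver the sharper $x_{1}(\log(1/x_{1}))^{1/2}$ for $n=2$ rather than the paper's $x_{1}(1-\log x_{1})$, precisely because you save the stretching factor $\rho_{\varepsilon}^{-1}$; this should itself be a warning sign, since that bound matches the paper's \emph{lower} bound $\overline{a}$ in Theorem \ref{thm:converse_Alexandrov} and would close a gap the paper explicitly leaves open.)

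To repair the argument, abandon the translation and follow the paper: rescale in $x'$ so the barrier fills the full width of the cylinder, pay the price $\rho^{-2(n-1)/n}$ in the constant (dimensional for $n\ge 3$; producing the extra $\varepsilon^{-1/2}\sim(\log(1/x_{1}))^{1/2}$ for $n=2$), and reflect across $\{x_{1}=1\}$ to pass from $v_{1}$ on $K_{1,1}$ to $v_{2}$ on $K_{2,1}$.
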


\begin{proof}
At first, we prove it only for $v_{1}$ on $\overline{K}_{1,1}$.
Take the function $w_{\varepsilon}$, the lower bound $\lambda\leq\det D^{2}w_{\varepsilon}$
and the radius $\rho$ from Lemma \ref{lem:bounds w_eps}, stretch
the domain by $1/\rho$ in the radial $x'$ directions, and multiply
the function by $\lambda^{-1/n}\rho^{-2(n{-}1)/n}$:
\begin{align*}
w_{1,\varepsilon}:\overline{K}_{1,1} & \rightarrow(-\infty,0)\\
w_{1,\varepsilon}(x_{1},x') & :=\lambda^{-1/n}\rho^{-2(n{-}1)/n}w_{\varepsilon}(x_{1},\rho x').
\end{align*}
The factor is chosen such that according to the reduction to a normalized
problem (Lemma \ref{lem:Reduction to normalized}),
\[
\det D^{2}w_{1,\varepsilon}\ge1.
\]
The function $w_{1,\varepsilon}\leq0$ on $\partial K_{1,1}$. Therefore,
the comparision principle yields $0\ge v\ge w_{1,\varepsilon}$. For
$n\ge3$, note that $b\leq1$ and we are done. For $n=2$, we continue
as follows:
\begin{align*}
\left|v(x)\right| & \leq\left|w_{1,\varepsilon}(x)\right|\leq\lambda_{\varepsilon}^{-1/2}\rho_{\varepsilon}^{-1}x_{1}^{1-\varepsilon}=\sqrt{8}x_{1}^{1-\varepsilon}/\varepsilon.
\end{align*}
In order to minimize this for fixed $x_{1}$ with respect to $\varepsilon$,
we differentiate:
\[
\frac{\partial}{\partial\varepsilon}\frac{x_{1}^{1-\varepsilon}}{\varepsilon}=\frac{x_{1}^{1-\varepsilon}}{\varepsilon}\left(-\ln x_{1}-\frac{1}{\varepsilon}\right).
\]
Thus, the minimum is expected at $\varepsilon=-1/\ln x_{1}$. Therefore
\begin{align*}
\left|v(x_{1},x_{2})\right| & \leq\sqrt{8}x_{1}^{1+1/\ln x_{1}}\left(-\ln x_{1}\right)=\sqrt{8}ex_{1}\left(-\ln x_{1}\right),
\end{align*}
if $\varepsilon=-1/\ln x_{1}\le1/2$, i.e.\ for $x_{1}\le e^{-2}$.
Since for all remaining $x_{1}\in[e^{-2},1]$,
\[
\left|v(x)\right|\le\left|w_{1,1/2}(x)\right|\leq\sqrt{8}x_{1}^{1-1/2}/(1/2)\le C_{2}x_{1}\left(1-\ln x_{1}\right),
\]
for a sufficiently large $C_{2}\geq\sqrt{8}e$, altogether we get
the global bound
\[
\left|v(x)\right|\le C_{2}x_{1}\left(1-\ln x_{1}\right)=C_{2}\underline{a}(x_{1}).
\]
 To extend this estimate to $v_{2}$ on $K_{2,1}$, note that on each
line $[0,1]\times\{x'\}$, $w_{\varepsilon}$ attains its minimum
at $x_{1}=1$, so it can be convexly extended by reflection:
\[
w_{\varepsilon}(x_{1},x'):=w_{\varepsilon}(2{-}x_{1},x')\quad\text{for }1<x_{1}\le2,
\]
preserving $\mu_{w_{\varepsilon}}\ge\lambda_{\varepsilon}$. Thus,
the estimates can be extended to the doubled domain.
\end{proof}
\begin{rem}
Sharper estimates can be achieved, if $a_{\varepsilon}$ and $b_{\varepsilon}(x'):=b_{\varepsilon}(\left|x'\right|)$
are defined as the solutions of the differential equations 
\begin{align*}
a_{\varepsilon}''a_{\varepsilon}^{n-1} & =-x^{-2\delta_{n2}\varepsilon}\\
a_{\varepsilon}(0)=a_{\varepsilon}(2) & =0\\
\left(\frac{2}{n}{-}\delta_{n2}\varepsilon\right)\left(1{-}\frac{2}{n}{+}\delta_{n2}\varepsilon\right)b_{\varepsilon}''^{n-1}b_{\varepsilon}+\left(\frac{2}{n}{-}\delta_{n2}\varepsilon\right)^{2}b_{\varepsilon}''^{n{-}2}b_{\varepsilon}'^{2} & =-1\\
b_{\varepsilon}(-1)=b_{\varepsilon}(1) & =0.
\end{align*}
This improves the constant $C_{n}$, but not the quality of the estimate.
\end{rem}

\begin{lem}[Converse AMP on a rectangle]
\label{lem:upper bound}The Monge--Ampère measure $\mu_{\overline{w}_{*}}$
of the convex envelope of $\overline{w}(x)=\overline{a}(x_{1})b(x')$
from Definition \ref{def:main} has a density which is bounded from
\emph{above}.
\end{lem}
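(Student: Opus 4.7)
The plan is to compute $\det D^2 \overline w$ classically on the smoothness region, show it is uniformly bounded from above by an explicit dimensional constant $M(n)$, and then transfer this bound to the density of $\mu_{\overline w_*}$ via the standard Hessian comparison at contact points of the convex envelope.

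First, applying the identity \eqref{eq:det} (whose derivation is purely algebraic in the profile and so carries over verbatim to $\overline a$ in place of $a_\varepsilon$), I would obtain for $n\ge 3$ with $\overline a(x_1)=x_1^{2/n}$ the cancellations $\overline a^n = x_1^2$, $(\overline a'/\overline a)^2 = (2/n)^2 x_1^{-2}$, $\overline a''/\overline a = (2/n)(2/n-1)x_1^{-2}$, so that
\[
\det D^2\overline w(x_1,x') = \frac{2(n-2) - (n+2)\,|x'|^2}{n^2},
\]
a polynomial in $|x'|^2$ independent of $x_1$ and bounded above by $2(n-2)/n^2$. For $n=2$, setting $L := -\ln x_1 \ge 0$, a direct computation yields $\overline a\,\overline a'' = -(L+1)/(2L+1)$ and $(\overline a')^2 = 2L^2/(2L+1)$, whence
\[
\det D^2 \overline w(x_1,x_2) = \frac{1}{2L+1}\left(L+1 - \frac{L+1+4L^2}{2}\,x_2^2\right),
\]
whose maximum on the region where it is nonnegative is attained at $x_2=0$ and equals $(L+1)/(2L+1) \le 1$. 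Hence, wherever $\det D^2 \overline w \ge 0$, it is bounded by an explicit $M(n)$.

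Next I would pass to $\overline w_*$ by a two-case analysis at any point $x_0 \in K_{1,\sqrt 2}$ at which $\overline w_*$ is twice differentiable (a full-measure set by Alexandrov's theorem for convex functions). If $\overline w_*(x_0) < \overline w(x_0)$, then maximality of the convex envelope forces $\overline w_*$ to be affine on some non-trivial segment through $x_0$ (otherwise $\overline w_*$ could be lifted slightly near $x_0$ while staying convex and below $\overline w$), so $D^2 \overline w_*(x_0)$ has a zero eigenvalue and $\det D^2 \overline w_*(x_0) = 0$. If instead $\overline w_*(x_0) = \overline w(x_0)$, the nonnegative function $\overline w - \overline w_*$ attains its minimum at $x_0$, forcing $0 \le D^2 \overline w_*(x_0) \le D^2 \overline w(x_0)$ in the positive semidefinite order; monotonicity of the determinant on the PSD cone then yields $\det D^2 \overline w_*(x_0) \le \det D^2 \overline w(x_0) \le M(n)$.

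The final step is to conclude that $\mu_{\overline w_*}$ itself, not merely its absolutely continuous part, is bounded by $M(n)\,dx$. This follows from the classical $C^{1,1}_{\mathrm{loc}}$ regularity of the convex envelope of a smooth function, which guarantees that $\mu_{\overline w_*}$ has no singular component in the open cylinder. The main obstacle is precisely this regularity claim: although $\overline w$ is $C^\infty$ on $K_{1,\sqrt 2}$, its derivatives blow up as $x_1 \to 0$, so I would establish the density bound uniformly on an exhaustion $K_{1,\sqrt 2} \cap \{x_1 \ge \delta\}$ and let $\delta \to 0$. An alternative route --- approximating $\overline w$ by $C^2$ strictly convex perturbations, computing Monge--Ampère densities classically, and passing to the limit by weak continuity of $\mu$ --- avoids the regularity statement about the envelope entirely.
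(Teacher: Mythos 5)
Your pointwise computation of $\det D^2\overline w$ via \eqref{eq:det} is correct and lands exactly on the paper's bound: $2(n-2)/n^2$ for $n\ge3$, and $(L+1)/(2L+1)\le1$ for $n=2$ (which, with $L=-\ln x_1$, is the paper's $\tfrac{1-\ln x_1}{1-2\ln x_1}\le1$). The paper reaches the same bound a touch more quickly by noting $\overline a>0>\overline a''$, $b\ge-1$, and simply dropping the negative $(\overline a'/\overline a)^2|x'|^2$ term, so no separate discussion of where $\det D^2\overline w\ge0$ is needed.

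Where your argument diverges is in how the bound is transferred from $\det D^2\overline w$ to the \emph{measure} $\mu_{\overline w_*}$. The paper cites \cite[Proposition~A.35]{Figalli2017}, which states directly that $\mu_{\overline w_*}(E)\le\int_E\chi_{\{\overline w=\overline w_*\}}\det D^2\overline w\,dx$, so no separate regularity or singular-part discussion is needed. You instead attempt to re-derive this: your two-case Alexandrov-point analysis (zero Hessian determinant off the contact set, PSD Hessian comparison on it) is correct and gives the a.e.\ bound $\det D^2\overline w_*\le M(n)$, but, as you yourself flag, that only controls the absolutely continuous part of $\mu_{\overline w_*}$, and the exclusion of a singular part is precisely the nontrivial content of the Figalli proposition. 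Your two proposed workarounds are not convincing as written. The exhaustion argument needs an extra step: the convex envelope taken over $K_{1,\sqrt2}\cap\{x_1>\delta\}$ is not the restriction of $\overline w_*$, and one would have to argue monotone local-uniform convergence of these envelopes to $\overline w_*$ as $\delta\to0$ and then invoke weak-$*$ stability of the Monge--Ampère measure; none of this is spelled out. The ``strictly convex $C^2$ perturbation'' route is confused, since $\overline w$ is not convex to begin with, and convex envelopes are not stable under such perturbations in a way that obviously transfers a density bound. So the computation is right, but the final step needs either the explicit citation of Proposition~A.35 (as in the paper) or a careful proof of it; the sketch as given leaves that gap open.
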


\begin{proof}
According to \cite[Proposition A.35, p.~184]{Figalli2017}, the Monge--Ampère
measure of the convex envelope can be bounded by
\[
\mu_{\overline{w}_{*}}(E)\leq\int_{E}\chi_{\left\{ x~\middle|~\overline{w}(x)=\overline{w}_{*}(x)\right\} }\det D^{2}\overline{w}\,dx
\]
 for all Borel sets $E$, so it suffices to bound $\det D^{2}\overline{w}$
from above. Start with (\ref{eq:det}) for $\overline{w}$ instead
of $w_{\varepsilon}$ and recall that $\overline{a}>0>\overline{a}''$,
$b\ge-1$ to estimate
\begin{align*}
\det D^{2}\overline{w}(x_{1},x') & =\left(\frac{\overline{a}''(x_{1})}{\overline{a}(x_{1})}b(x')-\left(\frac{\overline{a}'(x_{1})}{\overline{a}(x_{1})}\left|x'\right|\right)^{2}\right)\overline{a}(x_{1})^{n}\\
 & \le-\overline{a}''(x_{1})\overline{a}(x_{1})^{n-1}\\
 & =\begin{cases}
\frac{1-\ln x_{1}}{1-2\ln x_{1}}\le1 & \text{for }n=2,\\
\frac{2}{n}\left(1{-}\frac{2}{n}\right) & \text{otherwise.}
\end{cases}
\end{align*}
\end{proof}

\section{\label{sec:results}Improved Hölder continuity and $W^{1,p}$ estimates
for functions with bounded Hessian determinant.}
\begin{lem}[Strengthened Alexandrov Maximum Principle]
\label{lem:new_Alexandrov}Let $L\in\Aff(\mathbb{R}^{n})$ with $L\Omega\subset B_{1}^{n}$,
$u\in C(\overline{\Omega})$ be a convex function with Monge--Ampère
measure $\mu_{u}\le\Lambda dx$ and $u|_{\partial\Omega}=0$. For
the same constant $C_{n}>0$ as in Lemma \ref{lem:AMP on a square}
and $\underline{a}$ from Definition \ref{def:main} it holds that
\begin{align*}
\left|u(x)\right| & \le C_{n}\left|\det L\right|^{-2/n}\Lambda^{1/n}\underline{a}(\left|L\right|\dist(x,\partial\Omega))
\end{align*}
for $\underline{a}$ as in Definition \ref{def:main} and for all
$x\in\Omega$.
\end{lem}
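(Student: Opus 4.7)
I would proceed in three steps. First, Lemma~\ref{lem:Reduction to normalized} reduces the claim to a normalized setting: the function
\[
v\;:=\;|\det L|^{2/n}\Lambda^{-1/n}\,u\circ L^{-1}\quad\text{on }\overline{L\Omega}
\]
is convex, satisfies $\mu_v\le dx$, and vanishes on $\partial(L\Omega)$. Writing $y_0:=Lx$ and $h:=\dist(y_0,\partial(L\Omega))$, the standard inequality $h\le|L|\dist(x,\partial\Omega)$ together with monotonicity of $\underline a$ further reduce the claim to
\[
|v(y_0)|\le C_n\,\underline a(h).
\]

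Next I would set up coordinates adapted to $y_0$. Let $z\in\partial(L\Omega)$ satisfy $|y_0-z|=h$. Since $B_h^n(y_0)\subset L\Omega$ touches $\partial(L\Omega)$ at $z$, the hyperplane through $z$ orthogonal to $y_0-z$ is a supporting hyperplane of $L\Omega$ at $z$. Apply an isometry that rotates $y_0-z$ onto $he_1$ and then translates along $e_1$ so that $z$ lands in the hyperplane $\{x_1=0\}$; denote by $D'$, $v'$, $y_0'=(h,z')$ the images of $L\Omega$, $v$, $y_0$, where $z'\in\mathbb{R}^{n-1}$ is the transverse part of the image of $z$ and satisfies $|z'|\le|z|\le 1$. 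The rotation leaves $B_1^n$ invariant; the subsequent translation moves it to $B_1^n(-\zeta e_1)$ for some $\zeta\in[-1,1]$. Every $(x_1,x')\in D'\subset B_1^n(-\zeta e_1)\cap\{x_1>0\}$ therefore obeys $(x_1+\zeta)^2+|x'|^2<1$, which combined with $x_1>0$ gives $x_1<1-\zeta\le 2$ and $|x'|<1$. Hence
\[
D'\subset(0,2)\times B_1^{n-1}=K_{2,1}.
\]
Moreover $B_h^n(y_0)\subset L\Omega\subset B_1^n$ forces $|y_0|+h\le 1$, so $h\le 1$ and consequently $y_0'\in\overline{K_{1,1}}$.

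Finally, compare $v_2$ (from Lemma~\ref{lem:AMP on a square}) with $v'$ on $\overline{D'}$. Since $v_2\le 0$ on $\overline{K_{2,1}}\supset\overline{D'}$ while $v'=0$ on $\partial D'$, we have $v_2\le v'$ on $\partial D'$; and $\mu_{v_2}=dx\ge\mu_{v'}$ in $D'$. The comparison principle~(\ref{eq:comparision principle}) yields $v_2\le v'$ throughout $D'$, hence $|v'(y_0')|\le|v_2(y_0')|$. Applying Lemma~\ref{lem:AMP on a square} at the point $y_0'\in\overline{K_{1,1}}$, whose first coordinate is $h$, gives
\[
|v(y_0)|=|v'(y_0')|\le|v_2(y_0')|\le C_n\,\underline a(h),
\]
and undoing the normalization recovers the claimed bound on $|u(x)|$.

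The main obstacle is establishing the containment $D'\subset K_{2,1}$ in the second step: the $e_1$-translation shifts the center of the ambient ball, but the estimate $|\zeta|\le 1$ combined with the supporting half-space $\{x_1>0\}$ is precisely what keeps the $x_1$-extent below $2$ and the transverse radius below $1$. This is what permits the conclusion with the identical dimensional constant $C_n$ of Lemma~\ref{lem:AMP on a square}, without any inflation.
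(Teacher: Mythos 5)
Your proof is correct and takes essentially the same route as the paper: normalize via Lemma~\ref{lem:Reduction to normalized}, place the domain inside $K_{2,1}$ with the proximum at the bottom base, compare with $v_{2}$, and invoke Lemma~\ref{lem:AMP on a square}. The one place where you add genuine content is the verification that $D'\subset K_{2,1}$ --- the paper merely asserts ``we can assume that after a motion, $\Omega$ is included in $K_{2,1}$'' --- and your observation that the rotation fixes $B_{1}^{n}$ while the subsequent translation along $e_{1}$ shifts its center by at most $1$ is exactly the clean justification that is missing there.
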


\begin{rem}
For $n=2$ and any $\alpha<1$, $\left|u(x)\right|\lesssim\underline{a}(\left|L\right|\dist(x,\partial\Omega))$
implies $\left|u(x)\right|\lesssim\dist(x,\partial\Omega)^{\alpha}$.
\end{rem}

\begin{proof}
Lemma \ref{lem:Reduction to normalized} reduces the statement to
the normalized case where $L=\id$, $\Lambda=1$. Let $v_{2}$ be
as in Definition \ref{def:main}. The following configuration is depicted
in Figure \ref{fig:Omega in K}. Let $x_{0}$ be a proximum of $x$
on $\partial\Omega$ and we can assume that after a motion, $\Omega$
is included in $K_{2,1}$ and touches the bottom base $B_{B}:=\{0\}\times\overline{B}_{1}^{n-1}$
of $K_{2,1}$ at $x_{0}$.
\begin{figure}
\begin{tikzpicture}[scale=.8] 
\draw (0,0) -- (3,0) -- (4,4) -- (2,4) -- (0,1) -- cycle; 
\draw (2.5,1.5) node {$\Omega$}; 
\draw[gray] (-1,0) rectangle +(5,5); 
\draw (-.5,4.5) node {$\tilde K$}; 
\draw[very thick] (-1,0) -- node[below left]{$B_B$} (4,0);  
\draw (2,1) circle [radius=.3mm] node[above]{$x$};  \draw[dotted] (2,1) -- (2,0);  \draw (2,0) circle [radius=.3mm] node[below]{$x_0$}; \end{tikzpicture} 

\caption{\label{fig:Omega in K}$\Omega$ in $K_{2,1}$}
\end{figure}
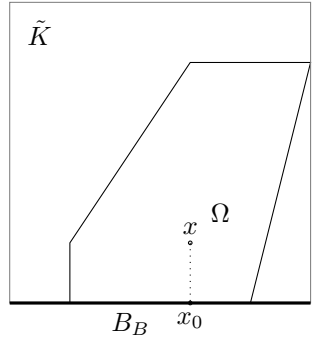
 The comparison principle yields that $\left|u\right|\leq\left|v\right|$.
Since $\Omega$ is included in a ball of radius 1, it holds that $x_{1}=\left|x{-}x_{0}\right|=\dist(x,\partial\Omega)\le1$.
Thus, Lemma \ref{lem:AMP on a square} implies
\[
\left|u(x)\right|\le\left|v(x)\right|\le C_{n}\underline{a}\left(x_{1}\right)=C_{n}\underline{a}\left(\dist(x,\partial\Omega)\right).
\]
\end{proof}
With the equivalence of the AMP and Hölder continuity (\ref{eq:H=0000F6lderstetigkeit AMP})
and the subadditivity between the Monge--Ampère measure and the modulus
of continuity (Lemma \ref{lem:subadditivity MAM modul}), this implies
our main result:
\begin{thm}[Hölder continuity]
\label{thm:H=0000F6lderstetigkeit}Let $\Omega\subset\mathbb{R}^{n}$
be open, bounded and convex, $L\in\Aff(\mathbb{R}^{n})$ which maps
$\Omega$ into $B_{1}^{n}$, $f:\Omega\to\mathbb{R}$ with $f\le\Lambda<\infty$,
and $g$ be a convex function on $\overline{\Omega}$. Let $u$ be
the continuous convex solution of (\ref{eq:Alexandrov formulation}).
For the same dimensional constant $C_{n}>0$ as in Lemma \ref{lem:AMP on a square},
it holds for the modulus of continuity of $u$ that
\[
\omega_{u}(\delta)\leq\omega_{g}(\delta)+C_{n}\left|\det L\right|^{-2/n}\Lambda^{1/n}\begin{cases}
\left|L\right|\delta\left(1-\ln(\left|L\right|\delta)\right) & \text{for }n=2,\\
\left(\left|L\right|\delta\right)^{2/n} & \text{otherwise}.
\end{cases}
\]
Particularly, if $g\in C^{\alpha}(\overline{\Omega})$ for some $\alpha<1$,
then $u\in C^{\min\{\alpha,2/n\}}(\overline{\Omega})$. 
\end{thm}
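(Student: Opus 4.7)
The plan is to reduce the theorem with general boundary data to the zero boundary data case handled by the strengthened Alexandrov Maximum Principle (Lemma \ref{lem:new_Alexandrov}), using the decomposition lemma (Lemma \ref{lem:subadditivity MAM modul}) as the bridge between the two.

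First, I would introduce two auxiliary convex functions on $\overline{\Omega}$. Let $u_1 \in C(\overline{\Omega})$ be the convex solution of the Dirichlet problem with Monge--Ampère measure $\Lambda\,dx$ and vanishing boundary values (existence is guaranteed for $g=0$ by the results cited in the introduction). Let $u_2 := (g|_{\partial\Omega})_*$ be the convex envelope of the boundary data, which is convex, continuous on $\overline{\Omega}$, equals $g$ on $\partial\Omega$, and as remarked after (\ref{eq:comparision4modulus}) minimizes the modulus of continuity over all convex extensions of $g|_{\partial\Omega}$, so in particular $\omega_{u_2}\le\omega_g$.

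Next, I would verify the hypotheses of Lemma \ref{lem:subadditivity MAM modul} with $u_{12}:=u$. On $\partial\Omega$, $u_1+u_2 = 0 + g = u$. Inside $\Omega$, since $\mu_{u_2}\ge 0$ and $\mu_u = f\,dx \le \Lambda\,dx = \mu_{u_1}$, we have $\mu_u \le \mu_{u_1}+\mu_{u_2}$. The lemma therefore yields
\[
\omega_u \le \omega_{u_1}+\omega_{u_2}\le \omega_{u_1}+\omega_g.
\]
It remains to bound $\omega_{u_1}$. Because $u_1$ vanishes on $\partial\Omega$ and $\mu_{u_1}\le \Lambda\,dx$, the strengthened Alexandrov Maximum Principle (Lemma \ref{lem:new_Alexandrov}) gives
\[
|u_1(x)|\le C_n\,|\det L|^{-2/n}\Lambda^{1/n}\,\underline{a}(|L|\dist(x,\partial\Omega))
\]
for every $x\in\Omega$. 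By the reduction of the modulus to the boundary for convex functions vanishing on $\partial\Omega$ (\ref{eq:H=0000F6lderstetigkeit AMP}), and since $\underline{a}$ is monotone,
\[
\omega_{u_1}(\delta)\le C_n\,|\det L|^{-2/n}\Lambda^{1/n}\,\underline{a}(|L|\delta).
\]
Substituting the explicit form of $\underline{a}$ from Definition \ref{def:main} produces exactly the estimate claimed in the theorem.

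For the Hölder regularity statement, suppose $g\in C^\alpha(\overline{\Omega})$ with $\alpha<1$, so $\omega_g(\delta)\lesssim\delta^\alpha$. For $n\ge 3$ the $u_1$-contribution is $\lesssim\delta^{2/n}$, and since $\min\{\alpha,2/n\}$ is the weaker of the two exponents, $\omega_u(\delta)\lesssim\delta^{\min\{\alpha,2/n\}}$. For $n=2$ the $u_1$-contribution grows like $\delta(1-\ln\delta)$, which for any $\alpha<1$ is bounded by $C\delta^\alpha$ (as noted in the remark after Lemma \ref{lem:new_Alexandrov}), so $\omega_u(\delta)\lesssim\delta^\alpha$ and $u\in C^\alpha=C^{\min\{\alpha,1\}}=C^{\min\{\alpha,2/n\}}$. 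No step is really an obstacle here: the conceptual work is packed into Lemmas \ref{lem:subadditivity MAM modul} and \ref{lem:new_Alexandrov}, and the proof amounts to assembling them through the right decomposition of $u$.
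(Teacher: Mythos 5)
Your proof is correct and matches the paper's own argument: both decompose $u$ via Lemma \ref{lem:subadditivity MAM modul} with $u_1$ the zero-boundary solution for $\Lambda\,dx$, bound $\omega_{u_1}$ through the strengthened AMP and the boundary characterization (\ref{eq:H=0000F6lderstetigkeit AMP}), and absorb the boundary data into $\omega_g$. The only cosmetic difference is that you take $u_2 = (g|_{\partial\Omega})_*$ and then estimate $\omega_{u_2}\le\omega_g$, whereas the paper takes $u_2 = g$ directly, which gives the same bound with one fewer step.
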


\begin{proof}
Apply (\ref{eq:H=0000F6lderstetigkeit AMP}) and Lemma \ref{lem:subadditivity MAM modul},
where $u_{1}$ is the solution for zero boundary conditions from the
last lemma, $u_{2}=g$, and $u_{12}$ is the seeked solution $u$.
\end{proof}
Together with Lemma \ref{lem:H=0000F6lder Sobolev}, this implies
the following immediately.
\begin{cor}[$\dist$-weighted $W^{1,p}$ regularity]
\label{thm:w1p}Let $\Omega\subset B_{R}^{n}$, and $f$ and $u$
be as in Theorem 18, where $g\in C^{\alpha}(\overline{\Omega})$ is
convex. Then the following holds true for the weak derivative $\nabla u$
of $u$. Let $p,\beta\in[0,\infty)$\textup{ with $\left(1-\min\{2/n,\alpha\}\right)p-\beta=:q<1$}.
Then
\begin{align*}
\int_{\Omega}\left|\nabla u(x)\right|^{p}\dist(x,\partial\Omega)^{\beta}dx & \le C(n,R,\Lambda,p,\beta,\omega_{g})<\infty.
\end{align*}
In particular, $u\in W^{1,p}(\Omega)$ for all $p<\left(1-\min\{2/n,\alpha\}\right)^{-1}$.
\end{cor}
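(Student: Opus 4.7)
The plan is to chain the two preceding results: first invoke Theorem \ref{thm:H=0000F6lderstetigkeit} to transfer the Hölder continuity from $g$ to $u$ with the improved exponent $\min\{\alpha, 2/n\}$, and then feed the resulting estimate directly into Lemma \ref{lem:H=0000F6lder Sobolev} to obtain the weighted $L^p$ bound on $|\nabla u|$.

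First I would fix a convenient affine map; the simplest choice is $L := R^{-1}\id$, which maps $\Omega \subset B_R^n$ into $B_1^n$ and satisfies $|L| = 1/R$ as well as $|\det L|^{-2/n} = R^2$. Inserting this into Theorem \ref{thm:H=0000F6lderstetigkeit}, in dimension $n \ge 3$ one immediately obtains
\[
\omega_u(\delta) \le \omega_g(\delta) + C_n R^{2-2/n} \Lambda^{1/n} \delta^{2/n} \le C_H\, \delta^{\min\{\alpha, 2/n\}}
\]
on $[0, \diam(\Omega)]$, with $C_H$ depending only on $n$, $R$, $\Lambda$ and $\omega_g$. For $n = 2$ the theorem delivers instead the extra logarithmic factor $(\delta/R)(1 - \ln(\delta/R))$; since $\alpha < 1$ and $\min\{\alpha, 2/n\} = \alpha$ in this case, the elementary bound $t(1 - \ln t) \le C(\alpha, t_0)\, t^{\alpha}$ on any interval $[0, t_0]$ absorbs the logarithm into a pure Hölder term of exponent $\alpha$. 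In both cases $u$ is $\min\{\alpha, 2/n\}$-Hölder continuous with constant $C_H = C(n, R, \Lambda, \omega_g)$.

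Second, I would invoke Lemma \ref{lem:H=0000F6lder Sobolev} applied to $u$ with Hölder exponent $\min\{\alpha, 2/n\}$, Hölder constant $C_H$, and the prescribed $p$, $\beta$ satisfying the hypothesis $q := (1 - \min\{\alpha, 2/n\})p - \beta < 1$. The lemma directly produces the weighted integral bound with explicit constant proportional to $\mathcal{H}^{n-1}(\partial B_1^n)\, R^{n-1}\, C_H^p\, r^{1-q}/(1-q)$, where $r := \max_{\Omega} \dist(\cdot, \partial \Omega) \le R$; specializing to $\beta = 0$ yields the $W^{1,p}(\Omega)$ statement for $p < (1 - \min\{\alpha, 2/n\})^{-1}$.

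No serious obstacle is expected, since the corollary is essentially the composition of the two previous results. The only detail that requires a moment of care is absorbing the $n = 2$ logarithmic factor into a pure polynomial Hölder bound of exponent $\alpha$, which is possible precisely because the hypothesis includes $\alpha < 1$.
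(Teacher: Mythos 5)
Your proposal is correct and follows exactly the route the paper intends: the paper's own ``proof'' is the single sentence that Theorem~\ref{thm:H=0000F6lderstetigkeit} together with Lemma~\ref{lem:H=0000F6lder Sobolev} gives the corollary immediately, and you have simply filled in the implicit details, including the sensible choice $L=R^{-1}\id$ and the absorption of the $n=2$ logarithmic factor into a $\delta^{\alpha}$ bound using $\alpha<1$.
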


\begin{rem}
(The dependence of $\left\Vert \nabla u\right\Vert _{p}$ on a normalizing
map and $\Lambda$.) Let $g:=0$ and let $L\in\Aff(\mathbb{R}^{n})$
such that
\[
B_{1}^{n}\subset L\Omega\subset B_{n}^{n}.
\]
Then there is a constant $C(n,p)>0$ such that
\begin{equation}
\left(\frac{\int_{\Omega}\left|\nabla u(x)\right|^{p}dx}{\mu(\Omega)}\right)^{1/p}\le\frac{\left|L\right|}{\left(\det L\right)^{2/n}}\Lambda^{1/n}C(n,p).\label{eq:dependence on L and Lambda}
\end{equation}
\end{rem}

\begin{proof}
Write $u$ as $\Lambda^{1/n}\det L^{-2/n}v\circ L$, such that $v$
maps from the normalized domain $L\Omega\subset B_{n}^{n}$ and has
the Monge--Ampère measure $\det D^{2}v(Lx)=\det D^{2}u(x)/\Lambda\leq1$.
Corollary \ref{thm:w1p} bounds the left-hand side of (\ref{eq:dependence on L and Lambda})
for $v$ instead of $u$ dependently only on $n$ and $p$. Meanwhile,
Lemma \ref{lem:Reduction to normalized} says that $\left|\nabla u(x)\right|\le\det L^{-2/n}\left|L\right|\left|\nabla v(Lx)\right|\Lambda^{1/n}$,
so the same factor arises if we estimate the $p$-mean of the pointwise
norms of the gradients: 
\[
\left(\frac{\int_{\Omega}\left|\nabla u(x)\right|^{p}dx}{\mu(\Omega)}\right)^{1/p}\le\frac{|L|}{\det L^{2/n}}\left(\frac{\int_{L\Omega}\left|\nabla v(x)\right|^{p}dx}{\mu(L\Omega)}\right)^{1/p}\le\frac{|L|}{\det L^{2/n}}\Lambda^{1/n}C(n,p).
\]
\end{proof}

\section{\label{sec:converse bounds}Converse bounds for the exponents}

Now we show that the exponent of $\delta$ in Theorem \ref{thm:H=0000F6lderstetigkeit}
cannot be larger than $2/n$ and that $\int_{\Omega}\left|\nabla u(x)\right|^{n/(n-2)}dx=\infty$,
if the boundary of $\Omega$ contains a flat piece where $u$ is affine
and $\det D^{2}u\ge\lambda>0$.
\begin{thm}[Converse Hölder and Sobolev bounds]
\label{thm:converse_Alexandrov}Assume that the boundary of $\Omega$
contains some flat boundary piece, namely that
\[
\tilde{F}=\{0\}\times B_{\rho}^{n-1}\subset\overline{\Omega}\subset[0,\infty)\times\mathbb{R}^{n-1}.
\]
Let $u\in C(\overline{\Omega})$ be a convex function with Monge--Ampère
measure $\mu_{u}\ge\lambda dx>0$ and let $u|_{\tilde{F}}$ be affine.
Let $L\in\Aff(\mathbb{R}^{n})$ be the map from Lemma (\ref{lem:Randdaten annullieren})
such that
\[
F:=\{0\}\times B_{2}^{n-1}\subset L\tilde{F}\cup\overline{K}_{2,2}\subset L\overline{\Omega}\subset[0,\infty)\times\mathbb{R}^{n-1}.
\]
\begin{enumerate}
\item Then there exists a constant $C_{L,g}$, depending only on $L$ and
$u|_{\partial\Omega}$, and a constant $C_{n}>0$, depending only
on $n$, such that for every $x\in L^{-1}K_{1,1}$ and its projection
$x_{0}:=L^{-1}\left(0,\left(Lx\right)'\right)$ it holds that
\begin{align*}
 & u\left(x_{0}\right)-u(x)\\
 & \geq-C_{L,g}\left|x{-}x_{0}\right|+\frac{C_{n}\lambda^{1/n}}{\left|\det L\right|^{2/n}}\begin{cases}
\left|L\right|\left|x{-}x_{0}\right|\left(1/2-\ln\left(\left|L\right|\left|x{-}x_{0}\right|\right)\right)^{1/2} & \text{if }n=2,\\
\left(\left|L\right|\left|x{-}x_{0}\right|\right)^{2/n} & \text{otherwise.}
\end{cases}
\end{align*}
In particular, $u$ is not Lipschitz continuous and not Hölder continuous
with exponent $\alpha>2/n$.
\item For $n\ge2$, $\left\Vert \nabla u\right\Vert _{\infty}=\infty$.
For $n\ge3$, $\left\Vert \nabla u\right\Vert _{n/\left(n{-}2\right)}=\infty$.
\end{enumerate}
\end{thm}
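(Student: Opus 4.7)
My plan is to transport the problem to the cylinder $K_{2,2}$ via Lemma~\ref{lem:Randdaten annullieren} and then pit the hypothesis $\mu_u \ge \lambda\,dx$ against the upper density bound on $\mu_{\overline{w}_{*}}$ from Lemma~\ref{lem:upper bound} through the comparison principle~(\ref{eq:comparision principle}). With $L$ and $l_g$ as in Lemma~\ref{lem:Randdaten annullieren}, set $\tilde u(y) := u(L^{-1}y) - l_g(y)$. Then $\tilde u$ is convex, vanishes on $F = \{0\}\times B_{2}^{n-1}$, is nonpositive on $\overline{K}_{2,2}$, and the scaling rule~(\ref{eq:scaling properties}) gives $\mu_{\tilde u} \ge \lambda\,|\det L|^{-2}\,dx$.

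Next I would construct the comparison function on $\overline{K}_{1,\sqrt{2}}$. Letting $C_n'>0$ be the density bound on $\mu_{\overline{w}_{*}}$ from Lemma~\ref{lem:upper bound}, set $c := (\lambda/(C_n'\,|\det L|^{2}))^{1/n}$, $\alpha := c\,\overline{a}(1)$, and
\[
W(y) := c\,\overline{w}_{*}(y) + \alpha y_1.
\]
Adding an affine function leaves the Monge--Ampère measure unchanged, so $\mu_W = c^n \mu_{\overline{w}_{*}} \le c^n C_n' = \lambda\,|\det L|^{-2} \le \mu_{\tilde u}$. The boundary condition $\tilde u \le W$ on $\partial K_{1,\sqrt{2}}$ can be checked face by face: the bottom lies in $F$, so $W = 0 = \tilde u$; on the lateral face $\{|y'|=\sqrt{2}\}$ extremality of the convex envelope gives $\overline{w}_{*}=0$, hence $W = \alpha y_1 \ge 0 \ge \tilde u$; on the open top $\{1\}\times B_{\sqrt{2}}^{n-1}$, the value $x_1 = 1$ is extreme in the $x_1$-direction, so the only admissible convex combinations stay in the top face where $\overline{w}_{*}=\overline{w}$, giving $W(1,y') = c\,\overline{a}(1)\,|y'|^2/2 \ge 0 \ge \tilde u$. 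The comparison principle~(\ref{eq:comparision principle}) then yields $\tilde u \le W$ throughout $K_{1,\sqrt{2}}$.

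From here, for $x \in L^{-1}K_{1,1}$, $y := Lx$, $y_0 := Lx_0 = (0,y')$, and $y_1 = |L(x-x_0)|$ (the quantity written $|L||x{-}x_{0}|$ in the statement), I use $\overline{w}_{*} \le \overline{w}$ and $|b(y')| \ge 1/2$ for $|y'|\le 1$ to obtain
\[
-\tilde u(y) \ge c\,\overline{a}(y_1)\,|b(y')| - \alpha y_1 \ge \tfrac{c}{2}\,\overline{a}(y_1) - \alpha y_1.
\]
Combined with the identity $u(x_0) - u(x) = -\tilde u(y) + l_g(y_0) - l_g(y)$ and the affine-Lipschitz bound $|l_g(y_0)-l_g(y)| + \alpha y_1 \le C_{L,g}\,|x - x_0|$ (with $C_{L,g}$ depending only on $L$ and $g|_{\partial\Omega}$), this proves claim~(1) with $C_n := (2\,C_n'^{\,1/n})^{-1}$. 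For claim~(2), the inequality in~(1) forces $(u(x_0)-u(x))/|x-x_0| \to \infty$ as $x \to x_0$ along $L^{-1}e_1$ because $\overline{a}(t)/t \to \infty$, so Lemma~\ref{lem:gradient differenzenquotient} gives $\|\nabla u\|_\infty = \infty$. For $n \ge 3$, the same reasoning refines to a pointwise lower bound $|\nabla u(x)| \gtrsim \dist(x,\tilde F)^{2/n-1}$ near an interior point of $\tilde F$; integrating the $n/(n-2)$th power of this reduces by Fubini to $\int_0^\varepsilon t^{-1}\,dt = \infty$, so $\|\nabla u\|_{n/(n-2)} = \infty$.

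The main obstacle is arranging $\tilde u \le W$ on the top face of $K_{1,\sqrt{2}}$: there the bare candidate $c\,\overline{w}_{*}$ reaches $-c\,\overline{a}(1)$ at $(1,0)$, while $\tilde u$ is only known to be nonpositive, so the pure comparison $\tilde u \le c\,\overline{w}_{*}$ may fail on the top. The affine correction $+\alpha y_1$ — admissible because it leaves $\mu_W$ invariant — lifts $W$ back to being nonnegative on the top, removing the obstruction. Since $\overline{a}(y_1)/y_1 \to \infty$, the linear cost of this correction is harmlessly absorbed into $C_{L,g}|x-x_0|$ and dominated by $\tfrac{c}{2}\overline{a}(y_1)$ for small $|x-x_0|$.
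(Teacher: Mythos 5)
Your overall strategy is the same as the paper's: transport via Lemma~\ref{lem:Randdaten annullieren}, then compare $\tilde u$ against a multiple of $\overline w_*$ on $K_{1,\sqrt 2}$ using Lemma~\ref{lem:upper bound} and~(\ref{eq:comparision principle}), and for part~(2) reduce to a one-dimensional integral over the fibres. The genuinely different step is how you meet the boundary condition on the top face $B_T=\{1\}\times\overline B_{\sqrt 2}^{n-1}$. The paper chooses a \emph{small} factor $C$ so that $C\overline w_*\ge u_0$ on $B_T$; that $C$ is then controlled by $-\max_{B_T}u_0$, which is a quantity depending on the solution $u$, and the paper does not supply the extra argument (AMP on $K_{2,2}$ plus convexity towards the lateral boundary) needed to bound $\max_{B_T}u_0$ away from zero by a dimensional constant. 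You instead keep $c$ at its dimensional optimum $c=(\lambda/(C_n'|\det L|^2))^{1/n}$ and repair the top face by adding the affine term $\alpha y_1$ with $\alpha=c\,\overline a(1)$. Since $\overline w_*=\overline w$ on each face of the cylinder (the restriction of $\overline w$ to a face is convex), this makes the boundary inequality on the top a clean $W(1,y')=c\overline a(1)|y'|^2/2\ge 0\ge\tilde u$, and $C_n=(2C_n'^{1/n})^{-1}$ comes out manifestly dimensional. That is a real improvement over the proof as printed.

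Two caveats. First, your $\alpha y_1$ has coefficient $c\,\overline a(1)$, and $c$ depends on $\lambda$ (and $\det L$); so folding $\alpha y_1$ into $C_{L,g}|x-x_0|$ makes that constant depend on $\lambda$ as well, contrary to the statement that $C_{L,g}$ depends only on $L$ and $u|_{\partial\Omega}$. You need either to allow that extra dependence, or to absorb $\alpha y_1$ into the $\overline a$-term for small $|x-x_0|$ using $\overline a(t)/t\to\infty$, and handle the bounded range separately. Second, in part~(2) the citation of Lemma~\ref{lem:gradient differenzenquotient} is off: that lemma gives an \emph{upper} bound for elements of $\partial u(x)$, not a lower one, so it cannot by itself yield $\|\nabla u\|_\infty=\infty$; the correct reason (as in the paper) is that a function whose gradient is essentially bounded is Lipschitz, which~(1) excludes. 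Similarly your ``pointwise lower bound $|\nabla u(x)|\gtrsim\dist(x,\tilde F)^{2/n-1}$'' is not justified by a subgradient inequality alone; the paper's route via Jensen's inequality applied to $\int_0^\varepsilon|\partial_1 u_0|^p\,dx_1$ on each fibre is the rigorous way to get $\|\nabla u\|_{n/(n-2)}=\infty$. These gaps in~(2) are easily closed, and the key idea in~(1) --- the affine lift of $c\,\overline w_*$ --- is correct and arguably cleaner than what the paper does.
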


\begin{proof}
(1) Lemma \ref{lem:Reduction to normalized} reduces the statement
to the case where $L=\id$. Let $l_{g}$ be the affine map given by
Lemma \ref{lem:Randdaten annullieren}. The function $u_{0}:=u-l_{g}$
is non-positive on $\partial K_{2,2}$ and vanishes on $F$. Thus,
for $x\in K_{2,2}$ we have
\[
u(0,x')-u(x)=-u_{0}(x)+l_{g}(0,x')-l_{g}(x).
\]
Hence, letting $C_{L,g}$ be the Lipschitz constant of $l_{g}$, it
remains to find a constant $C_{n}>0$ such that for every $x\in K_{1,1}$
\begin{equation}
\left|u_{0}(x)\right|\ge\lambda^{1/n}C_{n}\overline{a}(x_{1})\label{eq:converse_Alexandrov}
\end{equation}
for $\overline{a}$ from Definition \ref{def:main}. The scaling property
(\ref{eq:scaling properties}) reduces the statement to the normalized
case where $\lambda=1$. Let $B_{T}:=\{1\}\times\overline{B}_{\sqrt{2}}^{n-1}$
be the top base of $K_{1,\sqrt{2}}$. If $u_{0}$ vanished anywhere
in $\Omega$, the convexity would constrain it to vanish at all, contradicting
$\mu_{u_{0}}\geq dx$. So $u_{0}<0$ in $\Omega$ and attains a negative
maximum on the compactum $B_{T}\subset K_{2,2}$. Therefore, there
exists a small constant $C>0$ such that for the convex envelope $\overline{w}_{*}$
of $\overline{w}(x)=\overline{a}(x_{1})b(x')$ from Definition \ref{def:main},
it holds that
\begin{itemize}
\item $C\overline{w}_{*}\ge u_{0}$ on $B_{T}$ and anyhow $C\overline{w}_{*}=0\ge u_{0}$
everywhere else on $\partial K_{1,\sqrt{2}}$, 
\item $\mu_{C\overline{w}_{*}}\le1dx\leq\mu_{u_{0}}$.
\end{itemize}
(The latter condition is enabled by Lemma \ref{lem:upper bound} and
the scaling properties.) Then the comparision principle (\ref{eq:comparision principle})
says that $C\overline{w}_{*}\ge u_{0}|_{K_{1,\sqrt{2}}}$ and we get
for $x\in K_{1,1}$, where $-b(x')=1-x^{2}/2\geq1/2$, 
\begin{align*}
-u_{0}(x) & \ge-C\overline{w}_{*}(x)\geq-C\overline{w}(x)\geq\frac{1}{2}C\overline{a}(x_{1}).
\end{align*}

(2) The statements are independent of addition or subtraction of an
affine function, so it suffices to show it for $u_{0}$. For $n=2$
we have just seen that $u_{0}$ is not Lipschitz at the flat piece.
As a continuous function on the closure of an open domain, it is not
possible to make it Lipschitz by removing a null set from the domain,
so $\left\Vert \nabla u_{0}\right\Vert _{\infty}=\infty$. For $n\ge3$,
assume that the affine map $L$ from the first step is the identity
without loss of generality. Start with the estimate
\[
\int_{\Omega}\left|\nabla u_{0}(x)\right|^{p}dx\geq\int_{B_{1}^{n-1}}\int_{0}^{1}\left|\frac{\partial u_{0}}{\partial x_{1}}(x_{1},x')\right|^{p}dx_{1}\,dx'.
\]
We claim that $\int_{0}^{1}\left|\partial_{1}u_{0}(x_{1},x')\right|^{p}dx_{1}=\infty$
for each $x'\in B_{1}^{n-1}$, such that the whole integral is infinite.
If it were not infinite, for every $\delta>0$ there would have to
be an $\varepsilon>0$ such that 
\[
\int_{0}^{\varepsilon}\left|\partial_{1}u_{0}(x_{1},x')\right|^{p}dx_{1}<\delta
\]
by the monotone convergence theorem. Contradicting that, the Jensen
inequality, the fundamental theorem of calculus (which is applicable
because a continuous convex function on a compactum is absolutely
continuous), and (\ref{eq:converse_Alexandrov}) yield
\begin{align*}
\int_{0}^{\varepsilon}\left|\frac{\partial u_{0}}{\partial x_{1}}(x_{1},x')\right|^{p}dx_{1} & \ge\varepsilon\left(\frac{u_{0}(\varepsilon,x')-u_{0}(0,x')}{\varepsilon}\right)^{p}\\
 & \ge C\varepsilon^{1+p(2/n{-}1)}=C
\end{align*}
for $p=n/\left(n{-}2\right)$.
\end{proof}

\section{Conclusion and a remaining question}

The solution $u$ of a problem with right-hand side 1 and affine boundary
data on a flat boundary piece grows like $x^{2/n}$ near the flat
part of the boundary for $n\ge3$. For $n=2$, it has been established
that the modulus of continuity of $u$ lies between
\[
\delta(1{-}\log\delta)^{1/2}\lesssim\omega_{u}(\delta)\lesssim\delta(1{-}\log\delta).
\]
But what is an explicit function $a$ with
\[
a\lesssim\omega_{u}\lesssim a?
\]

\bibliographystyle{alpha}
\bibliography{literatur}

\begin{thebibliography}{Sav13b}

\bibitem[Ale58]{alexandrov}
Aleksandr~D.\ Aleksandrov.
\newblock Dirichlet's problem for the equation {{\(\text{Det}\| z_{ij}\|
  =\varphi (z_ 1,\ldots ,z_ n,z,x_ 1,\ldots ,x_ n)\)}}. {I}.
\newblock {\em Vestn. Leningr. Univ., Mat. Mekh. Astron.}, 13(1):5--24, 1958.

\bibitem[Br{\'e}83]{brezis}
Ha{\"{\i}}m Br{\'e}zis.
\newblock {\em Analyse fonctionnelle: th{\'e}orie et applications}.
\newblock Collection Math{\'e}matiques appliqu{\'e}es pour la ma{\^\i}trise.
  Paris: Masson, 1983.

\bibitem[Caf90]{caffarelli_localization}
Luis~A.\ Caffarelli.
\newblock A localization property of viscosity solutions to the
  {Monge}--{Amp{\`e}re} equation and their strict convexity.
\newblock {\em Ann. of Math. (2)}, 131(1):129--134, 1990.

\bibitem[EG15]{Evans}
Lawrence~Craig Evans and Ronald~F. Gariepy.
\newblock {\em Measure theory and fine properties of functions}.
\newblock Textb. Math. Boca Raton, FL: CRC Press, 2nd revised edition, 2015.

\bibitem[Fig17]{Figalli2017}
Alessio Figalli.
\newblock {\em The {M}onge--{A}mp\`ere equation and its applications}.
\newblock Zurich Lectures in Advanced Mathematics. European Mathematical
  Society (EMS), Z\"{u}rich, 2017.

\bibitem[LN14]{le_w2p}
Nam~Q.\ Le and Truyen Nguyen.
\newblock Global {$W^{2,p}$} estimates for solutions to the linearized
  {Monge}--{Amp\`ere} equations.
\newblock {\em Math. Ann.}, 358, 04 2014.

\bibitem[LN17]{le_global_w1p}
Nam~Q. Le and Truyen Nguyen.
\newblock Global {{\(W^{1,p}\)}} estimates for solutions to the linearized
  {Monge}-{Amp{\`e}re} equations.
\newblock {\em J. Geom. Anal.}, 27(3):1751--1788, 2017.

\bibitem[Sav12]{savin_localization}
Ovidiu Savin.
\newblock A localization property at the boundary for {Monge}-{Amp{\`e}re}
  equation.
\newblock In {\em Advances in geometric analysis. Collected papers of the
  workshop on geometry in honour of Shing-Tung Yau's 60th birthday, Warsaw,
  Poland, April 6--8, 2009}, pages 45--68. Somerville, MA: International Press;
  Beijing: Higher Education Press, 2012.

\bibitem[Sav13a]{savin_w2p}
O.~Savin.
\newblock Global {{\(W^{2,p}\)}} estimates for the {Monge}-{Amp{\`e}re}
  equation.
\newblock {\em Proc. Am. Math. Soc.}, 141(10):3573--3578, 2013.

\bibitem[Sav13b]{savin_pointwise}
Ovidiu Savin.
\newblock Pointwise {$C^{2,\alpha}$} estimates at the boundary for the
  {Monge--Ampère} equation.
\newblock {\em Journal of the American Mathematical Society}, 26:63--99, 01
  2013.

\bibitem[Ste18]{stefani}
Giorgio Stefani.
\newblock On the monotonicity of perimeter of convex bodies.
\newblock {\em J. Convex Anal.}, 25(1):93--102, 2018.

\bibitem[TU83]{trudinger_urbas}
Neil~S. Trudinger and John I.~E. Urbas.
\newblock The {Dirichlet} problem for the equation of prescribed {Gauss}
  curvature.
\newblock {\em Bull. Aust. Math. Soc.}, 28:217--231, 1983.

\bibitem[TW08]{trudinger_wang}
Neil~S. Trudinger and Xu-Jia Wang.
\newblock The {Monge}-{Amp{\`e}re} equation and its applications.
\newblock In {\em Handbook of geometric analysis. No. 1}, pages 467--524.
  Somerville, MA: International Press; Beijing: Higher Education Press, 2008.

\bibitem[Urb88]{urbas}
John I.~E. Urbas.
\newblock Global {H{\"o}lder} estimates for equations of {Monge}-{Amp{\`e}re}
  type.
\newblock {\em Invent. Math.}, 91(1):1--29, 1988.

\bibitem[Urb91]{urbas_gauss}
John I.~E. Urbas.
\newblock Boundary regularity for solutions of the equation of prescribed
  {Gauss} curvature.
\newblock {\em Ann. Inst. Henri Poincar{\'e}, Anal. Non Lin{\'e}aire},
  8(5):499--522, 1991.

\bibitem[Win09]{winter_n_w1p_at_the_b}
Niki Winter.
\newblock {$W^{2,p}$}- and {$W^{1,p}$}-estimates at the boundary for solutions
  of fully nonlinear, uniformly elliptic equations.
\newblock {\em Z. Anal. Anwend.}, 28:129--164, 01 2009.

\end{thebibliography}

\end{document}